\documentclass[12pt,reqno]{amsart}
\usepackage{amsthm}
\usepackage{amsfonts}
\usepackage{amsmath}
\usepackage{amssymb}
\usepackage{mathrsfs}
\usepackage{epsfig}
\usepackage{graphicx}
\usepackage{epstopdf}
\usepackage{tikz-cd}
\usepackage{color}
\usepackage{ifthen}
\usepackage{float}
\usepackage{cancel} 
\usepackage{comment}
\usepackage{stmaryrd}
\usepackage{hyperref}
\makeatletter
\@namedef{subjclassname@2010}{%
\textup{2010} Mathematics Subject Classification}
\makeatother
\usepackage[normalem]{ulem}
\newtheorem{theorem}{Theorem}[section]
\newtheorem{proposition}[theorem]{Proposition}
\newtheorem{corollary}[theorem]{Corollary}
\newtheorem{lemma}[theorem]{Lemma}

\newtheorem{remark}[theorem]{\bf Remark}

\theoremstyle{remark}

\theoremstyle{definition}

\newcommand{\bq}{\begin{equation}}
\newcommand{\eq}{\end{equation}}
\newcommand{\beqn}{\begin{eqnarray*}}
\newcommand{\eeqn}{\end{eqnarray*}}
\newcommand{\beq}{\begin{eqnarray}}
\newcommand{\eeq}{\end{eqnarray}}

\usepackage{soul}

\newcommand{\bc}{\begin{centre}}
\newcommand{\ec}{\end{centre}}

\newcommand{\dist}{{\rm\,dist}}

\newcommand{\sgn}{{\rm sgn\,}}
\newcommand{\ba}{\begin{array}}
\newcommand{\ea}{\end{array}}

\newcommand{\inp}[2]{\langle{#1},\,{#2} \rangle}

\renewcommand{\Delta}{{\nabla}}

\def \z{\boldsymbol{z}}
\def \\theta{\boldsymbol{\theta}}
\def \\xi{\boldsymbol{\xi}}

\begin{document}
\title[Commutant lifting]{Commutant lifting and interpolation on quotients of bounded symmetric domains}

\author[M. K. Mal]{Milan Kumar Mal}
\address[M. K. Mal]{Department of Mathematics, Indian Institute of Technology Madras, Chennai 600036, India}
 \email{ma21d018@smail.iitm.ac.in; milanmal1702@gmail.com }

\thanks{Support for the work of the author was provided in the form of a Prime Minister's Research Fellowship (PMRF / 2502827).}

 \subjclass[2020]{Primary 47A57, 47A20, 47B32 Secondary 32H35, 32E30}

 \keywords{Commutant lifting, Nevanlinna-Pick interpolation, quotient domain, bounded symmetric domain, complex reflection group, Hardy space, inner function}


\date{}

\begin{abstract}
Let $\Omega\subseteq \mathbb C^d$ be a bounded symmetric domain, $G$ a finite complex reflection group acting on $\mathbb C^d$, and $\boldsymbol \theta:\Omega\to \boldsymbol \theta(\Omega)$ the associated proper holomorphic map factored by $G.$ In this paper, we investigate commutant lifting and interpolation by Schur functions on the quotient domain $\boldsymbol \theta(\Omega).$
For a given quotient module of the Hardy space $H^2(\boldsymbol\theta(\Omega))$, 
we obtain equivalent criteria for a contractive module map to admit a Schur-class lift: one in terms of the contractivity of an associated functional on a subspace of $L^1(\partial\boldsymbol\theta(\Omega))$, and another in terms of a geometric distance formula in the same $L^1$-space.
Specializing to quotient domains of the polydisc factored by imprimitive finite complex reflection groups, we obtain a commutant lifting criterion formulated in terms of inner functions.
Finally, we apply these operator-theoretic results to finite-point Nevanlinna--Pick type interpolation problems on $\boldsymbol \theta(\Omega)$.  
Since the symmetrized bidisc and the tetrablock arise as quotient domains of suitable bounded symmetric domains, these criteria apply in particular to those domains.

\end{abstract}


\maketitle

\section{Introduction}
Interpolation by bounded analytic functions on the unit disc $\mathbb D \subseteq \mathbb C$ has long been a central topic in complex analysis and operator theory. Given distinct points $z_1,\ldots, z_p\in \mathbb D$ and scalars $w_1,\ldots, w_p \in \mathbb D,$ the classical Nevanlinna-Pick interpolation problem asks for necessary and sufficient condition for the existence of a holomorphic function $\varphi: \mathbb D \to \mathbb C$ satisfying $$\sup_{z\in \mathbb D} |\varphi(z)|\leq 1, \;\;\text{and}\;\;\varphi(z_i)=w_i\;\; \text{for all}\;\; i=1,\ldots, p.$$
The celebrated theorem of Nevanlinna and independently Pick asserts that such a function exists if and only if the associated $p\times p$ matrix, called the {\it  Pick matrix}, given by 
\beq \label{pick matrix} \left(\frac{1-w_i \bar{w}_j}{1-z_i \bar{z}_j}
\right)_{i,j=1}^p,\eeq is positive semi-definite.  

After nearly four decades, Donald Sarason introduced an operator-theoretic approach to the interpolation problem in \cite{DS1967}. His result, now known as the commutant lifting theorem, may be formulated as follows. Let $H^2(\mathbb D)$ be the Hardy space on $\mathbb D$, and let $T_{z}$ denote the multiplication operator on $H^2(\mathbb D)$ by the coordinate function $z$. A closed subspace $\mathcal{Q}$ of $H^2(\mathbb D)$ is called a quotient module if $T_{z}^* \mathcal{Q}\subseteq \mathcal{Q}.$ Let $P_{\mathcal{Q}}$ be the orthogonal projection of $H^2(\mathbb D)$ onto $\mathcal{Q}.$ 
Sarason’s theorem states that if $X$ is a bounded linear operator on $H^2(\mathbb D)$ satisfying $X P_{\mathcal{Q}} T_{z}|_{\mathcal{Q}}= P_{\mathcal{Q}} T_z |_{\mathcal{Q}}X$, then there exists a bounded holomorphic function $\varphi$ on $\mathbb D$ such that $$X=P_{\mathcal{Q}} T_{\varphi}|_{\mathcal{Q}} \;\; \text{and}\;\; \|X\|=\|\varphi\|_{\infty},$$ where $T_{\varphi}$ denotes the multiplication operator by $\varphi$ on $H^2(\mathbb D).$ As an application, Sarason recovered the Nevanlinna–Pick interpolation theorem.

The multivariable counterparts of both the interpolation and commutant lifting have since attracted considerable attention. In contrast to the disc, interpolation on higher dimensional domains is generally not governed by a single Pick-type matrix. The interpolation problem on the bidisc was solved by Agler and McCarthy in \cite{AM1999}. The same problem on the symmetrized bidisc $\mathbb G_2$ has been resolved recently in \cite{AY2017, BS2018}. 
There have also been several efforts to address interpolation and commutant lifting problems in general multivariable settings; see, for instance, \cite{BT1998, BLTT1999, BB2004}. However, many of these results apply only to restricted classes of functions or operators. More recently, a comprehensive characterization of both interpolation and commutant lifting for the polydisc $\mathbb D^d$
and the Euclidean unit ball $\mathbb B_d$ have been obtained in \cite{DeS2026, BDS2025}. 

The purpose of this paper is to develop commutant lifting and interpolation results on a broad class of domains that arise as the image of a bounded symmetric domain under a proper holomorphic map factored by a finite complex reflection group. 

Recall that a bounded domain $\Omega\subseteq \mathbb C^d$ is {\it symmetric} if, for every pair of points $\z, \z'\in \Omega,$ there exists an involution in the group of biholomorphic automorphisms $\mathrm{Aut}(\Omega)$ interchanging them. 
In their standard realization, bounded symmetric domains are circled, convex, and contain the origin. The open unit disc $\mathbb D$, the unit polydisc $\mathbb D^d$, and the Euclidean ball $\mathbb B_d$ in $\mathbb C^d$ are some well studied examples of bounded symmetric domains. For a masterful exposition on bounded symmetric domains, the reader is referred to \cite{LO1977, AJ1995}.

Given bounded domains $\Omega_1,\Omega_2\subseteq \mathbb C^d$, a holomorphic map $\pi: \Omega_1\to \Omega_2$ is said to be {\it proper} if the preimage of every compact subset of $\Omega_2$ is compact in $\Omega_1.$ 
Such a map is necessarily surjective, and it follows from \cite[Section 1.2]{RD1982} that there exists a positive integer $n$ (referred to as {\it multiplicity} of $\pi$) such that
$$\text{cardinality of } \pi^{-1}(\boldsymbol w) \begin{cases}
    =n, \qquad \boldsymbol w\in \Omega_2\setminus\pi(N(J_\pi))\; \text{and}\\
     < n, \qquad \boldsymbol w\in \pi(N(J_\pi)),
\end{cases}$$
where $J_\pi$ is the complex Jacobian determinant of $\pi$, and  $N(J_\pi)$ denotes its zero set.
We say $\pi$ is factored by {\it automorphisms} if there exists a finite subgroup $G\subseteq \mathrm{Aut}(\Omega_1)$ such that $\pi^{-1}(\pi(\z))=\{\sigma(\z): \sigma\in G\}$ for all $\z\in \Omega_1.$ 
It is known that such a group $G$ is either a group generated by complex reflections or conjugate to a complex reflection group, where a {\it complex reflection} is a finite order linear map $\sigma$ on $\mathbb C^d$ such that the rank of $I_d-\sigma$ is $1$. A group $G$ generated by finite number of complex reflections is referred as {\it finite complex reflection group}.


A bounded domain $\Omega$ in $\mathbb C^d$ is said to be $G$-invariant if $\sigma\cdot\z:=\sigma^{-1}(\z)\in \Omega$ for all $\sigma\in G.$ 
This induces an action on function $f:\Omega \to \mathbb C$ defined by  $(\sigma\cdot f)(\z):=f(\sigma^{-1}\cdot \z)$, and we say $f$ is {\it $G$-invariant} if $\sigma \cdot f=f$ for all $\sigma\in G.$ A fundamental characterization of finite complex reflection groups is that the ring of $G$-invariant polynomials is generated by a set of $d$ algebraically independent homogeneous polynomials (known as the {\it basic polynomials}) $\{\theta_1,\ldots, \theta_d\}$ associated to $G$ \cite[p. 282]{ST1954}. The mapping $\boldsymbol \theta=(\theta_1,\ldots, \theta_d): \Omega \to \boldsymbol \theta(\Omega)$ is proper holomorphic and factored by $G$ \cite{RD1982, TM2013}. Any other proper holomorphic map factored by $G$ factors through $\boldsymbol \theta$ via a biholomorphism.
Consequently, we consider $\boldsymbol \theta(\Omega)$ as the image of $\Omega$ under a proper holomorphic map factored by $G.$ 
Although the map $\boldsymbol \theta$ is not unique, the degrees of $\theta_j$'s are unique for $G$ up to order. Throughout this paper, $\Omega$ denotes a bounded symmetric domain, $G$ a finite complex reflection group acting on $\mathbb C^d$ under which $\Omega$ is invariant, and $\boldsymbol \theta(\Omega)$ the associated {\it quotient domain}, with typical elements denoted by $\boldsymbol \zeta=(\zeta_1,\ldots, \zeta_d)=\boldsymbol \theta(\z).$

Quotient domains have become an active subject of research in function theory and operator theory; see, for instance, \cite{BS1982, TM2013, MRZ2013, TirDS2021,  DS2021, BDGR2022,  GN2023, BBS2024}, as well as \cite{GR2024} and the references therein.
A suitable framework for Hardy spaces on quotient domains, together with the associated theory of Toeplitz operators, was developed in \cite{GR2024} (see also \cite{MRZ2013}). 
For our convenience, we recall in Section \ref{S2} those aspects of the Hardy space construction that are relevant to the present work.

Let $H^2(\boldsymbol \theta(\Omega))$ denote the Hardy space on the quotient domain $\boldsymbol \theta(\Omega),$ and let $\boldsymbol T_{\boldsymbol \zeta}=(T_{\zeta_1}, \ldots, T_{\zeta_d})$ be the commuting tuple of multiplication operators on $H^2(\boldsymbol \theta(\Omega))$ induced by the coordinate functions. 
We denote by $H^\infty(\boldsymbol \theta(\Omega))$ the Banach algebra of bounded holomorphic functions on $\boldsymbol \theta(\Omega),$ endowed with the supremum norm $\|\cdot\|_{\infty, \boldsymbol \theta(\Omega)}.$ 
The elements of the closed unit ball $\mathcal{S}(\boldsymbol{\theta}(\Omega))$ of $H^\infty(\boldsymbol{\theta}(\Omega))$ are referred to as {\it Schur functions}.
For any $\varphi \in H^\infty(\boldsymbol \theta(\Omega))$, the analytic Toeplitz operator $T_{\varphi}$ on $H^2(\boldsymbol \theta(\Omega))$ is the multiplication operator $$T_{\varphi}:H^2(\boldsymbol \theta(\Omega)) \to H^2(\boldsymbol \theta(\Omega)), \;\; T_\varphi f=\varphi f.$$ Moreover, $\|T_{\varphi}\|=\|\varphi\|_{\infty, \boldsymbol \theta(\Omega)}:=\sup\{|\varphi(\boldsymbol \zeta)|: \boldsymbol \zeta\in \boldsymbol \theta(\Omega)\}.$
Let $\mathcal{Q}$ be a quotient module of $H^2(\boldsymbol \theta(\Omega)),$ that is, a closed subspace invariant under $T_{\zeta_j}^*$ for each $ j=1,\ldots,d.$ Let $P_{\mathcal{Q}}$ denote the orthogonal projection of $H^2(\boldsymbol \theta(\Omega))$ onto $\mathcal{Q}.$ For any $\varphi\in H^\infty(\boldsymbol \theta(\Omega))$, the compression $S_{\varphi}:=P_{\mathcal{Q}} T_{\varphi}|_{\mathcal{Q}}$ defines a bounded operator on $\mathcal{Q}.$ 
In particular, $S_{\zeta_j}=P_{\mathcal{Q}} T_{\zeta_j}|_{\mathcal{Q}}\in \mathcal{B}(\mathcal{Q})$ for all $j=1,\ldots,d.$ It readily follows that $S_{\varphi }$ commutes with $S_{\zeta_j}$ for all $j,$ and that $\|S_{\varphi}\| \leq 1$ whenever $\varphi \in \mathcal{S}(\boldsymbol \theta(\Omega)).$
Motivated by this observation, we define a {\it module map} as any operator $X\in \mathcal{B}(\mathcal{Q})$ satisfying $XS_{\zeta_j}=S_{\zeta_j}X$ for all $j=1,\ldots,d.$ Thus every Schur function $\varphi\in \mathcal{S}(\boldsymbol \theta(\Omega))$ induces a contractive module map $S_{\varphi}$ on $\mathcal{Q}$.
Let $\mathcal{B}_1(\mathcal{Q})=\{X \in \mathcal{B}(\mathcal{Q}): \|X\|\leq 1\}.$ We say that a contractive module map $X\in \mathcal{B}_1(\mathcal{Q})$ is {\it liftable} or {\it admits a lift} if there exists a function $\varphi \in \mathcal{S}(\boldsymbol \theta(\Omega))$ such that $X=S_\varphi.$

The main objective of this paper is twofold. First, we establish several equivalent characterizations for the existence of Schur class lifts of contractive module maps on quotient modules of $H^2(\boldsymbol \theta(\Omega))$. These results are developed in Section \ref{S3}. In the special case of the quotient domain $\boldsymbol \theta(\mathbb D^d)$, arising from the polydisc $\mathbb D^d,$ we obtain a commutant lifting criterion expressed in terms of inner functions on $\boldsymbol \theta(\mathbb D^d)$ (see Theorem \ref{3rd characterization}). 

Secondly, in Section \ref{S4}, we apply these commutant lifting characterizations to interpolation problems. More precisely, for prescribed distinct points in $\boldsymbol \theta(\Omega)$ and prescribed target values in the unit disc, we derive criteria for the existence of a Schur class interpolant in $\mathcal{S}(\boldsymbol \theta(\Omega)).$ 
The symmetrized bidisc $\mathbb G_2$ and the tetrablock $\mathbb E$, which arise in connection with $\mu$-synthesis, are notable examples of quotient domains that have attracted considerable interest in geometric function theory and operator theory. The general results obtained in this paper apply in particular to these domains and therefore yield interpolation criteria on $\mathbb G_2$ and $\mathbb E.$

\section{Preliminaries}\label{S2}
In this section, we recall the Hardy space on the quotient domain $\boldsymbol \theta(\Omega)$ and describe its boundary realization. Recall that $\boldsymbol \theta=(\theta_1,\ldots, \theta_d):\Omega \to \boldsymbol \theta(\Omega)$ is the proper holomorphic map factored by the finite complex reflection group $G.$ We write $\partial\Omega$ and  $\partial\boldsymbol \theta(\Omega)$ for the Shilov boundaries of $\Omega$ and $\boldsymbol \theta(\Omega)$, respectively.
Since the quotient map $\boldsymbol\theta$ extends as a proper holomorphic map of the same multiplicity to suitable neighborhoods of the corresponding closures, it follows from \cite[p. 100, Corollary 3.2]{KZ2013} that $\partial\boldsymbol \theta(\Omega)=\boldsymbol \theta(\partial\Omega).$ 


Let $\mathbb K=\{\phi\in \mathrm{Aut}(\Omega): \phi(0)=0\}$ be the isotropy subgroup at the origin, and let $\mu$ denote the normalized $\mathbb K$-invariant Lebesgue measure on $\partial\Omega.$ Then the Hardy space $H^2(\Omega)$ consists of all holomorphic functions $f:\Omega\to \mathbb C$ such that  
$$\|f\|_{H^2(\Omega)}^2=\sup_{0<r<1} \int_{\partial\Omega} |f(r\z)|^2d\mu(z) <\infty.$$
Let $L^2(\partial\Omega)$ denote the space of square-integrable functions with respect to $\mu.$ By \cite[Theorem 6]{HM1969}, every $f\in H^2(\Omega)$ has a boundary value function $f^* \in L^2(\partial\Omega)$ satisfying $$\lim_{r \to 1^-} \int_{\partial\Omega}|f_r-f^*|^2 d\mu =0,\quad \text{and} \quad \|f\|_{H^2(\Omega)}=\|f^*\|_{L^2(\partial\Omega)},$$ where the radial slice  $f_r:\partial\Omega \to \mathbb C$ is given by $f_r(\z)=f(r\z)$ for $0<r<1$ (see also \cite[p. 126]{UH1996}). Thus, the boundary value map $i: H^2(\Omega) \to L^2(\partial\Omega)$, defined by $i(f)=f^*$, embeds $H^2(\Omega)$ isometrically into $L^2(\partial\Omega)$. We denote its range by $H^2(\partial\Omega)$. 

The space $H^2(\Omega)$ is a reproducing kernel Hilbert space. Its reproducing kernel, denoted by $\mathbb S_\Omega$, is the Szeg\"o kernel of $\Omega.$
The corresponding Poisson kernel $P:\Omega \times \partial\Omega  \to \mathbb C$ is given by $$P( \z, \boldsymbol \xi)=\frac{|\mathbb S_\Omega(\z, \boldsymbol \xi)|^2}{\mathbb S_\Omega(\z,\z)}, \qquad \z \in \Omega,\; \boldsymbol \xi \in \partial\Omega.$$ By \cite[Theorem 1]{HM1973} (see also \cite{KA1965}), any $f\in H^2(\Omega)$ can be recovered from its boundary value $f^* \in L^2(\partial\Omega)$ via the Poisson integral formula:
$$f(\z)=\int_{\partial\Omega}  P(\z, \boldsymbol \xi) f^*(\boldsymbol \xi)d\mu(\boldsymbol \xi), \qquad \z \in \Omega.$$
Moreover, $f_r$ converges to $f^*$ almost everywhere on $\partial\Omega$ (see \cite[Theorem 2]{WN1968}), that is,
$$f^*(\z)=\lim_{r \to 1^-} f(r\z)\qquad \text{for a.e}\; \z \in \partial \Omega.$$  

We now recall the Hardy space on the quotient domain. Using the change of variables formula, define a measure $\mu_{\boldsymbol\theta}$ supported on $\partial \boldsymbol \theta(\Omega) $ by
$$\int_{\partial\boldsymbol{\theta} (\Omega)} f d\mu_{\boldsymbol\theta}= \int_{\partial\Omega} (f\circ \boldsymbol\theta) |J_{\boldsymbol \theta}|^2 d\mu,$$ 
where $J_{\boldsymbol\theta}$ denotes the complex Jacobian determinant of the proper holomorphic map $\boldsymbol\theta: \Omega \to \boldsymbol \theta (\Omega).$ The Hardy space $H^2(\boldsymbol \theta(\Omega))$  is then defined as the Hilbert space of holomorphic functions $f$ on $\boldsymbol \theta(\Omega)$ for which $J_{\boldsymbol \theta} (f\circ \boldsymbol \theta)\in H^2(\Omega)$. Furthermore, the norm of a function $f\in H^2(\boldsymbol \theta(\Omega))$ is given by 
\beq \label{eq 4} \|f\|_{H^2(\boldsymbol \theta(\Omega))}=\frac{1}{c_{\boldsymbol \theta}} \left\{\sup_{0<r<1} \int_{\partial\Omega}|f\circ \boldsymbol\theta(r\z)|^2 |J_{\boldsymbol\theta}(r\z)|^2 d\mu(\z) \right\}^{1/2},\eeq where the normalization by $c_{\boldsymbol \theta}=\|J_{\boldsymbol\theta}\|_{H^2(\Omega)}$ ensures $\|1\|_{H^2(\boldsymbol \theta(\Omega))}=1.$
It follows from \cite[Proposition 3.13]{GR2024} that polynomials are dense in $H^2(\boldsymbol \theta(\Omega))$ and that it is a reproducing kernel Hilbert space with the reproducing kernel $\mathbb S_{\boldsymbol \theta(\Omega)}:\boldsymbol \theta(\Omega) \times \boldsymbol \theta(\Omega) \to \mathbb C$ given by 
\beq \label{Hardy kernel} \mathbb S_{\boldsymbol \theta(\Omega)}(\boldsymbol \theta(\z), \boldsymbol\theta(\boldsymbol w))=\frac{c_{\boldsymbol \theta}^2}{|G|}\frac{1}{J_{\boldsymbol \theta}(\z)\overline{J_{\boldsymbol \theta}(\boldsymbol w)}}  \sum_{\sigma\in G} \sgn(\sigma^{-1}) \mathbb S_{\Omega}(\sigma^{-1}\cdot \z, \boldsymbol w),\qquad \z, \boldsymbol w\in \Omega.\eeq

Let $L^2(\partial\boldsymbol \theta(\Omega))$ denote the space of square-integrable functions with respect to $\mu_{\boldsymbol\theta}.$ For $f\in L^2(\partial\boldsymbol \theta(\Omega)),$ the norm is given by 
\beq \label{eq 5} \|f\|_{L^2(\partial\boldsymbol \theta(\Omega))}= \frac{1}{c_{\boldsymbol \theta}} \|J_{\boldsymbol \theta} (f\circ \boldsymbol \theta)\|_{L^2(\partial \Omega)}= \frac{1}{c_{\boldsymbol \theta}}\left( \int_{\partial\Omega} |f(\boldsymbol \theta(\z))|^2 |J_{\boldsymbol \theta}(\z)|^2 d\mu(\z)\right)^{1/2}.\eeq

Recall that the proper map $\boldsymbol \theta=(\theta_1,\ldots, \theta_d) :\Omega \to \boldsymbol \theta(\Omega)$ consists of homogeneous polynomials; let $k_j$ denote the degree of $\theta_j$ for each $1\leq j\leq d.$  
If $f\in H^2(\boldsymbol \theta(\Omega))$, then $J_{\boldsymbol \theta}(f \circ \boldsymbol \theta) \in H^2(\Omega)$, and its boundary value $(J_{\boldsymbol \theta}(f \circ \boldsymbol \theta) )^*$ belongs to $ H^2(\partial\Omega).$ Since $J_{\boldsymbol \theta}$ is a homogeneous polynomial, the following limit exists for almost every $\z\in \partial\Omega$: \beq \label{eq 16}(J_{\boldsymbol \theta}(f \circ \boldsymbol \theta) )^*(z)=\lim_{r \to 1^-} J_{\boldsymbol \theta}(r\z)(f \circ \boldsymbol \theta)(r\z)=J_{\boldsymbol \theta}(\z) \lim_{r \to 1^-}f(r^{k_1} \theta_1(\z), \ldots, r^{k_d}\theta_d(\z)).\eeq
Let $V\subseteq \partial\Omega$ be the set of points where the limit in \eqref{eq 16} does not exist. Then the set $V$ is $G$-invariant and $\mu(V)=0$. Since the zero set $N(J_{\boldsymbol \theta})$ of $J_{\boldsymbol \theta}$ is $G$-invariant with $\mu(N(J_{\boldsymbol \theta}))=0$, we define a function $\tilde{f}^*$ on $\partial\boldsymbol \theta(\Omega)$ by
$$\tilde{f}^* (\boldsymbol \zeta)=\begin{cases}
    \frac{(J_{\boldsymbol \theta}(f \circ \boldsymbol \theta) )^*(z)}{J_{\boldsymbol \theta}(\z)}, \; \text{ if } \boldsymbol \zeta=\boldsymbol \theta(\z)\in \partial\boldsymbol \theta(\Omega) \text{ with } \z \notin V\cup N(J_{\boldsymbol \theta})\\
    0, \qquad\qquad\;\; \text{otherwise}
\end{cases}.$$
This gives a well defined element of $L^2(\partial\boldsymbol \theta(\Omega)).$ 
Given a function $f:\boldsymbol \theta(\Omega) \to \mathbb C$ and $0<r<1$, we define the slice $\tilde{f}_r:\partial\boldsymbol \theta(\Omega)\to \mathbb C$ by $$\tilde{f}_r(\boldsymbol \zeta)=f(r^{k_1}\zeta_1, r^{k_2} \zeta_2,\ldots, r^{k_d} \zeta_d),\qquad \boldsymbol \zeta=(\zeta_1,\ldots,\zeta_d)\in \partial\boldsymbol \theta(\Omega).$$ 
If $f\in H^2(\boldsymbol \theta(\Omega)),$ a routine verification shows that  $\tilde{f}_r \in L^2(\partial\boldsymbol \theta(\Omega))$ for all $0<r<1$, and that $\tilde{f}^*$ is measurable and square integrable with respect to $\mu_{\boldsymbol \theta}$, satisfying $$\lim_{r \to 1^-} \int_{\partial\boldsymbol \theta(\Omega)} |\tilde{f}_r-\tilde{f}^*|^2d\mu_{\boldsymbol \theta}=0.$$ Moreover, for all $\boldsymbol \zeta=\boldsymbol\theta(\z)$ with $\z\notin V\cup N(J_{\boldsymbol \theta})$, we have  $\tilde{ f}^*(\boldsymbol \zeta)=\lim_{r\to 1^-} \tilde{f}_r(\boldsymbol \zeta)$, and $J_{\boldsymbol \theta} (\tilde{f}^* \circ \boldsymbol \theta) \in H^2(\partial\Omega).$

Define $H^2(\partial\boldsymbol \theta(\Omega)):=\{f\in  L^2(\partial\boldsymbol \theta(\Omega)): J_{\boldsymbol \theta}(f \circ \boldsymbol \theta) \in H^2(\partial\Omega)\}.$ 
The following result establishes an isometric embedding of $H^2(\boldsymbol \theta(\Omega))$ onto the closed subspace $H^2(\partial\boldsymbol \theta(\Omega))$ of $L^2(\partial\boldsymbol \theta(\Omega)).$

\begin{proposition}\label{required result 1}
    Let $i': H^2(\boldsymbol \theta(\Omega)) \to L^2(\partial\boldsymbol {\theta}(\Omega))$ be the map given by $i'(f)=\tilde{f}^*.$ Then $i'$ is an isometry, and its range is precisely $H^2(\partial\boldsymbol \theta(\Omega)).$
\end{proposition}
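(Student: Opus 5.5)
The plan is to transport everything to $\Omega$ via the map $U: f\mapsto \frac{1}{c_{\boldsymbol\theta}} J_{\boldsymbol\theta}(f\circ\boldsymbol\theta)$ and use the known isometric embedding $i:H^2(\Omega)\to L^2(\partial\Omega)$ with range $H^2(\partial\Omega)$.

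\textbf{Isometry.} By \eqref{eq 4}, $U$ is an isometry from $H^2(\boldsymbol\theta(\Omega))$ into $H^2(\Omega)$. By \eqref{eq 5}, the analogous boundary map $\tilde U: g\mapsto \frac{1}{c_{\boldsymbol\theta}}J_{\boldsymbol\theta}(g\circ\boldsymbol\theta)$ is an isometry from $L^2(\partial\boldsymbol\theta(\Omega))$ into $L^2(\partial\Omega)$. The key identity is
$$\tilde U(\tilde f^*)=i(Uf) \quad \mu\text{-a.e. on } \partial\Omega .$$
Off the null set $V\cup N(J_{\boldsymbol\theta})$, this is exactly the definition of $\tilde f^*$ combined with \eqref{eq 16}, since $(J_{\boldsymbol\theta}(f\circ\boldsymbol\theta))^*$ is the a.e. radial limit. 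Two points need checking here. First, $\tilde f^*$ is well defined on $\partial\boldsymbol\theta(\Omega)$. If $\boldsymbol\theta(\z)=\boldsymbol\theta(\z')$, then $\z'=\sigma\z$ for some $\sigma\in G$, and $\boldsymbol\theta(r\sigma\z)=\boldsymbol\theta(r\z)$ because $\boldsymbol\theta$ is $G$-invariant and $\sigma$ is linear. Hence the limit $\lim_r f(r^{k_1}\theta_1(\z),\dots)$ depends only on $\boldsymbol\zeta$. Second, the exceptional sets are $G$-invariant and $\mu$-null, so they do not affect $L^2$ norms. It then follows that
$$\|\tilde f^*\|_{L^2(\partial\boldsymbol\theta(\Omega))}=\|i(Uf)\|_{L^2(\partial\Omega)}=\|Uf\|_{H^2(\Omega)}=\|f\|_{H^2(\boldsymbol\theta(\Omega))}.$$
Linearity of $i'$ is clear, so $i'$ is an isometry.

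\textbf{Range contained in $H^2(\partial\boldsymbol\theta(\Omega))$.} From the identity above, $J_{\boldsymbol\theta}(\tilde f^*\circ\boldsymbol\theta)=c_{\boldsymbol\theta}\, i(Uf)\in H^2(\partial\Omega)$. Hence $\tilde f^*\in H^2(\partial\boldsymbol\theta(\Omega))$ by definition.

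\textbf{Reverse inclusion (the main obstacle).} Take $g\in L^2(\partial\boldsymbol\theta(\Omega))$ with $J_{\boldsymbol\theta}(g\circ\boldsymbol\theta)=h^*$ for some $h\in H^2(\Omega)$. We must produce a holomorphic $f$ on $\boldsymbol\theta(\Omega)$ with $J_{\boldsymbol\theta}(f\circ\boldsymbol\theta)=h$.

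First I would show that $h$ is $\sgn$-anti-invariant, i.e. $h(\sigma^{-1}\cdot\z)=\sgn(\sigma^{-1})\,h(\z)$, where $\sgn(\sigma)=\det\sigma^{-1}$ or whichever convention makes $J_{\boldsymbol\theta}\circ\sigma=\det(\sigma)^{-1}J_{\boldsymbol\theta}$. On the boundary, $g\circ\boldsymbol\theta$ is $G$-invariant and $J_{\boldsymbol\theta}$ transforms by the character, so $h^*$ transforms by the character. Since $\mu$ is $\mathbb K$-invariant and $G\subseteq\mathbb K$ consists of unitary linear maps preserving $\Omega$ and $\partial\Omega$, the Poisson kernel satisfies $P(\sigma\z,\sigma\boldsymbol\xi)=P(\z,\boldsymbol\xi)$. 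The Poisson integral formula then carries the transformation law from $h^*$ to $h$ on $\Omega$.

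Next, a holomorphic relative invariant of this type is divisible by $J_{\boldsymbol\theta}$, with an invariant quotient. This is the classical fact (Steinberg) that $J_{\boldsymbol\theta}$ is the product of the linear forms of the reflecting hyperplanes, raised to the appropriate powers. Applied to the homogeneous expansion of $h$, which converges locally uniformly since $\Omega$ is circled, it gives $h=J_{\boldsymbol\theta}\,q$ with $q$ holomorphic and $G$-invariant on $\Omega$. Then $q=f\circ\boldsymbol\theta$ for a holomorphic $f$ on $\boldsymbol\theta(\Omega)$, by the standard descent of $G$-invariant holomorphic functions through the basic polynomials (or invoking \cite{GR2024}).

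Finally, $Uf=h/c_{\boldsymbol\theta}\in H^2(\Omega)$, so $f\in H^2(\boldsymbol\theta(\Omega))$. By the identity, $\tilde U(\tilde f^*)=\tilde U(g)$ a.e., and injectivity of $\tilde U$ gives $i'(f)=g$.

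Closedness of the range then follows from $i'$ being an isometry. I expect the divisibility/descent step to need the most care; if it is available in \cite{GR2024}, I would cite it directly.
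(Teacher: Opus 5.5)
Your proposal is correct and is essentially the paper's proof. Your $U$ and $\tilde U$ are the paper's $\Gamma_h$ and $\Gamma_l$, and your identity $\tilde U(\tilde f^*)=i(Uf)$ is its commutative diagram $i'=\Gamma_l^*\circ i\circ\Gamma_h$. The reverse inclusion goes the same way: you recognize $J_{\boldsymbol\theta}(g\circ\boldsymbol\theta)$ as the boundary value of a $G$-alternating $h\in H^2(\Omega)$, then pull $h$ back using that $\Gamma_h$ maps onto $H^2_{\sgn}(\Omega)$, which the paper simply cites from \cite[Lemma 3.5]{GR2024}.

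The only difference is that you sketch what that citation contains: alternation via the Poisson integral, Steinberg divisibility, and analytic descent. If you keep the sketch, note that locally uniform convergence of $\sum_k h_k$ does not by itself give convergence of $\sum_k h_k/J_{\boldsymbol\theta}$ near $N(J_{\boldsymbol\theta})$. Either argue that $h/J_{\boldsymbol\theta}$ extends holomorphically across $N(J_{\boldsymbol\theta})$ (local divisibility of $h$ at generic points of each reflecting hyperplane, then Riemann extension across the codimension-two intersections), or cite the lemma as you suggest.
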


\begin{proof} Let $\sgn(\cdot)$ denote the character of the sign representation of finite complex reflection group $G.$ A function $f$ in $d$ complex variables is said to be {\it $G$-alternating} if $\sigma\cdot f=\sgn(\sigma) f$ for all $\sigma\in G.$
  To relate the spaces $H^2(\boldsymbol \theta(\Omega))$ and $L^2(\partial\boldsymbol \theta(\Omega))$ to their corresponding function spaces on $\Omega$ and $\partial\Omega$, we consider the following subspaces:
\begin{align*}
    & H_{\sgn}^2(\Omega):=\left\{f\in H^2(\Omega): f \text{ is } G\text{-alternating}\right\}, \text{ and}\\
& L^2_{\sgn}(\partial\Omega):=\left\{h\in L^2(\partial\Omega): h \text{ is } G\text{-alternating }\mu \text{-a.e.} \right\}.
\end{align*} 
Clearly, the boundary value map $i: H^2(\Omega) \to H^2(\partial\Omega)$, when restricted to $H^2_{\sgn}(\Omega)$, surjects onto $H^2_{\sgn}(\partial\Omega):= L^2_{\sgn}(\partial\Omega) \cap H^2(\partial\Omega).$ 
 To transport this structure to the quotient domain $\boldsymbol \theta(\Omega)$, we consider the operators $\Gamma_h:H^2(\boldsymbol \theta(\Omega)) \to H^2_{\sgn}(\Omega)$ and  $\Gamma_{l}: L^2(\partial\boldsymbol \theta(\Omega))\to L_{\sgn}^2(\partial\Omega)$, defined by $$\Gamma_h f=\frac{1}{c_{\boldsymbol \theta}} J_{\boldsymbol \theta} (f \circ \boldsymbol \theta), \; f\in H^2(\boldsymbol \theta(\Omega)) \; \text{ and }\;  \Gamma_{l} g=\frac{1}{c_{\boldsymbol \theta}} J_{\boldsymbol\theta} (g \circ \boldsymbol\theta),\; g\in L^2(\partial\boldsymbol \theta(\Omega)).$$  
In view of \eqref{eq 4} and \eqref{eq 5}, both $\Gamma_h$ and $\Gamma_l$ are (well defined) isometries. In fact, they are unitary operators (see \cite[Lemma 3.5, Lemma 3.14]{GR2024}). This yields the following commutative diagram:
$$\begin{tikzcd}
H^2(\boldsymbol \theta(\Omega))\arrow{r}{\Gamma_l^{*} \circ i \circ \Gamma_h } \arrow[swap]{d}{\Gamma_h} &  L^2(\partial \boldsymbol \theta(\Omega)) \arrow{d}{\Gamma_l} \\
 H^2_{\sgn}(\Omega) \arrow{r}{i} &L^2_{\sgn}(\partial\Omega)
\end{tikzcd}.$$
 Hence, $\Gamma_l^{*} \circ i \circ \Gamma_h: H^2(\boldsymbol{\theta}(\Omega))\to L^2(\partial\boldsymbol \theta(\Omega))$ is an isometric embedding of $H^2(\boldsymbol \theta(\Omega))$ onto a closed subspace of $L^2(\partial\boldsymbol \theta(\Omega))$ (see \cite[Lemma 3.15]{GR2024}). It remains to verify that $i'=\Gamma_l^{*} \circ i \circ \Gamma_h$ and that its range is $H^2(\partial\boldsymbol \theta(\Omega)).$  
 
Let $f\in H^2(\boldsymbol \theta(\Omega)).$ By the preceding discussion, $\tilde{f}^*\in L^2(\partial\boldsymbol \theta(\Omega))$ and $J_{\boldsymbol \theta}(\tilde{f}^* \circ \theta)\in H_{\sgn}^2(\partial\Omega).$ Moreover, for almost every $\z\in \partial\Omega$, $$i(\Gamma_h(f))(\z)=\lim_{r \to 1^-} J_{\boldsymbol \theta}(r\z) f(\boldsymbol{\theta}(r\z))=J_{\boldsymbol \theta}(\z)\lim_{r\to 1^-} \tilde{f}_r(\boldsymbol{\theta}(\z)) =J_{\boldsymbol \theta}(\z) \tilde{f}^*(\boldsymbol{\theta}(\z)).$$
Consequently, $\Gamma_l^* \circ i \circ \Gamma_h (f)= \Gamma_l^*(J_{\boldsymbol{\theta}} \tilde{f}^* \circ \boldsymbol{\theta})=\tilde{f}^*=i'(f).$

To establish surjectivity, let $\tilde{f}^*\in H^2(\partial\boldsymbol \theta(\Omega)).$  
Equivalently,  $$\Gamma_l(\tilde{f}^*)=\frac{1}{c_{\boldsymbol \theta}} J_{\boldsymbol \theta} \tilde{f}^* \circ \boldsymbol \theta\in H^2_{\sgn}(\partial\Omega)=i(H^2_{\sgn}(\Omega)).$$ Hence, there exists $g\in H^2_{\sgn}(\Omega)$ such that $\frac{1}{c_{\boldsymbol \theta}} J_{\boldsymbol\theta} \tilde{f}^* \circ \boldsymbol\theta=g^*=i(g)\in H^2_{\sgn}(\partial\Omega).$ Setting $f=\Gamma_h^*(g)\in H^2(\boldsymbol \theta(\Omega)),$ a straightforward verification shows that $i'(f)=\tilde{f}^*.$ Therefore, $i'(H^2(\boldsymbol \theta(\Omega)))=H^2(\partial\boldsymbol \theta(\Omega)).$
\end{proof}

Let $L^\infty(\partial\boldsymbol \theta(\Omega))$ denote the algebra of essentially bounded $\mu_{\boldsymbol \theta}$-measurable functions on $\partial\boldsymbol \theta(\Omega)$, and let 
$L^\infty(\partial\Omega)^{G}=\{ f\in L^\infty (\partial\Omega): \sigma(f)=f\;  \mu\text{-a.e. for all } \sigma\in G\}.$ There exists an isometric $*$-isomorphism from $L^\infty(\partial\boldsymbol \theta(\Omega))$ onto $L^\infty(\partial\Omega)^{G}$ (see \cite[Subsection 3.6]{GR2024}). 
We define $$H^\infty(\partial \boldsymbol \theta(\Omega)):=H^2(\partial\boldsymbol \theta(\Omega)) \cap L^\infty(\partial\boldsymbol \theta(\Omega)).$$ Note that $H^\infty(\boldsymbol \theta(\Omega)) \subseteq H^2(\boldsymbol \theta(\Omega)).$ The following result shows that the boundary value map
$i'$ restricts to an isometric isomorphism between $H^\infty(\boldsymbol \theta(\Omega))$ and $H^\infty(\partial\boldsymbol \theta(\Omega)).$

\begin{proposition} \label{required result 2}
    For every $f\in H^\infty(\boldsymbol\theta(\Omega))$, the boundary value function $\tilde{f}^* $ belongs to $H^\infty(\partial\boldsymbol \theta(\Omega))$ and satisfies $\|\tilde{f}^*\|_{L^\infty(\partial\boldsymbol \theta(\Omega))}=\|f\|_{\infty, \boldsymbol\theta(\Omega)}.$ Furthermore, $i'(H^\infty(\boldsymbol \theta(\Omega)))=H^\infty(\partial\boldsymbol \theta(\Omega)).$ 
\end{proposition}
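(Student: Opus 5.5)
The plan is to transport everything to the symmetric domain $\Omega$ through $\boldsymbol\theta$ and use the classical boundary theory there. Fix $f\in H^\infty(\boldsymbol\theta(\Omega))$ and set $F=f\circ\boldsymbol\theta$. Then $F$ is a $G$-invariant bounded holomorphic function on $\Omega$ with $\|F\|_{\infty,\Omega}=\|f\|_{\infty,\boldsymbol\theta(\Omega)}$, because $\boldsymbol\theta$ is surjective. Bounded holomorphic functions lie in $H^2(\Omega)$, so $F$ has radial limits $F^*(\z)=\lim_{r\to1^-}F(r\z)$ for $\mu$-a.e.\ $\z\in\partial\Omega$. Since $\boldsymbol\theta$ is homogeneous, $F(r\z)=\tilde f_r(\boldsymbol\theta(\z))$. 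Off the null set $V\cup N(J_{\boldsymbol\theta})$ we also have $J_{\boldsymbol\theta}(\z)F(r\z)\to (J_{\boldsymbol\theta}(f\circ\boldsymbol\theta))^*(\z)$, and this gives
\[\tilde f^*(\boldsymbol\theta(\z))=F^*(\z)\quad\text{for }\mu\text{-a.e. }\z\in\partial\Omega.\]
Thus $\tilde f^*\circ\boldsymbol\theta=F^*$ a.e. This identifies $\tilde f^*$ with $F^*\in L^\infty(\partial\Omega)^G$ under the isometric $*$-isomorphism $L^\infty(\partial\boldsymbol\theta(\Omega))\cong L^\infty(\partial\Omega)^G$, which is just composition with $\boldsymbol\theta$.

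For the norm equality I will use the classical fact on $\Omega$ that $\|F^*\|_{L^\infty(\partial\Omega)}=\|F\|_{\infty,\Omega}$. The inequality $\le$ is immediate from the pointwise radial limits. The inequality $\ge$ follows from the Poisson representation $F(\z)=\int P(\z,\boldsymbol\xi)F^*(\boldsymbol\xi)\,d\mu(\boldsymbol\xi)$, using that $P\ge0$ and $\int P(\z,\cdot)\,d\mu=1$ (apply the reproducing formula to the constant $1$). Combining this with the isometric isomorphism gives $\|\tilde f^*\|_{L^\infty(\partial\boldsymbol\theta(\Omega))}=\|F^*\|_{L^\infty(\partial\Omega)}=\|F\|_{\infty,\Omega}=\|f\|_{\infty,\boldsymbol\theta(\Omega)}$. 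Membership in $H^2(\partial\boldsymbol\theta(\Omega))$ comes from Proposition \ref{required result 1}, since $H^\infty(\boldsymbol\theta(\Omega))\subseteq H^2(\boldsymbol\theta(\Omega))$. Hence $\tilde f^*\in H^\infty(\partial\boldsymbol\theta(\Omega))$.

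For surjectivity, take $g\in H^\infty(\partial\boldsymbol\theta(\Omega))$. By Proposition \ref{required result 1} there is $f\in H^2(\boldsymbol\theta(\Omega))$ with $\tilde f^*=g$, and it remains to show that $f$ is bounded. Let $\|g\|_\infty=M$. The function $J_{\boldsymbol\theta}\,(f\circ\boldsymbol\theta)$ lies in $H^2(\Omega)$ and has boundary values $J_{\boldsymbol\theta}\,(g\circ\boldsymbol\theta)$, with $|g\circ\boldsymbol\theta|\le M$ a.e. The key step, and the main obstacle, is to deduce that $F=f\circ\boldsymbol\theta$ is itself bounded by $M$. This amounts to a division problem: the boundary function $g\circ\boldsymbol\theta$ is bounded, but a priori it is not known to be the boundary value of a holomorphic function on $\Omega$. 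My first approach is to show that $F$ is the Poisson integral of $g\circ\boldsymbol\theta$, that is, that $F\in H^2(\Omega)$ with $F^*=g\circ\boldsymbol\theta$; the bound $|F|\le M$ would then follow from the positivity argument above.

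To get $F\in H^2(\Omega)$, I would argue on slices. For $\z\in\partial\Omega$, the function $\lambda\mapsto J_{\boldsymbol\theta}(\lambda\z)F(\lambda\z)=\lambda^{m}J_{\boldsymbol\theta}(\z)F(\lambda\z)$, where $m=\deg J_{\boldsymbol\theta}$, is in $H^2(\mathbb D)$ for a.e.\ $\z$. Since $\Omega$ is circled, the standard slice integration gives $\int_{\partial\Omega}\|h_{\z}\|^2_{H^2(\mathbb D)}\,d\mu(\z)=\|h\|^2_{H^2(\Omega)}$. Dividing by $\lambda^m$ preserves the $H^2(\mathbb D)$ norm because $\lambda^m$ is inner. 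Hence $J_{\boldsymbol\theta}(\z)F(\lambda\z)\in H^2(\mathbb D)$ with boundary values $J_{\boldsymbol\theta}(\z)g(\boldsymbol\theta(\lambda\z))$. For $\z\notin N(J_{\boldsymbol\theta})$, $F(\cdot\,\z)$ is then an $H^2(\mathbb D)$ function with boundary values bounded by $M$, so $|F(\lambda\z)|\le M$ by the one-variable theory. Since $\bigcup_{\z}\mathbb D\z$ covers $\Omega$ up to a null set and $F$ is continuous, $\|f\|_\infty=\|F\|_\infty\le M<\infty$. This gives $f\in H^\infty(\boldsymbol\theta(\Omega))$ and $i'(f)=g$.
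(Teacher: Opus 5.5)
Your first half is fine: the identification $\tilde f^*\circ\boldsymbol\theta=F^*$ $\mu$-a.e., the null-set correspondence between $\mu_{\boldsymbol\theta}$ and $\mu$, and the Poisson positivity argument on $\Omega$ give the norm equality directly. (The paper instead obtains $\|f\|_\infty\le\|\tilde f^*\|_\infty$ from its converse argument.) Your slice estimates in the second half are also fine. The gap is in the last sentence. In this paper $\partial\Omega$ is the \emph{Shilov} boundary and $\mu$ lives on it, so your slices only sweep out $\mathbb D\cdot\partial\Omega=\{\lambda\boldsymbol z:\lambda\in\mathbb D,\ \boldsymbol z\in\partial\Omega\}$. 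This set is all of $\Omega$ for the ball, but not in higher rank. For $\Omega=\mathbb D^d$ with $d\ge2$, the paper's main example, one gets $\mathbb D\cdot\mathbb T^d=\{\boldsymbol z\in\mathbb D^d:|z_1|=\cdots=|z_d|\}$. This is a closed subset of $\mathbb D^d$ with empty interior and Lebesgue measure zero. So it is false that the slices cover $\Omega$ up to a null set, and continuity of $F$ alone cannot carry the bound $|F|\le M$ off this set. As written, boundedness of $f$ is not proved.

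The missing ingredient is a maximum principle for the Shilov boundary, and either of the following closes the gap.

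(a) Since $\operatorname{supp}\mu=\partial\Omega$, the $\mu$-conull set of good $\boldsymbol z$ is dense, so continuity gives $|F|\le M$ on $r\,\partial\Omega$ for every $0<r<1$. As $F(r\,\cdot)$ is holomorphic on a neighbourhood of $\overline\Omega$ and $\partial\Omega$ is the Shilov boundary, $|F(r\boldsymbol z)|\le M$ for all $\boldsymbol z\in\overline\Omega$, hence $\|F\|_\infty\le M$.

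(b) Alternatively, finish the route you announced. The slice estimate gives $\|F(\cdot\,\boldsymbol z)\|_{H^2(\mathbb D)}\le M$ for $\mu$-a.e. $\boldsymbol z$, so the slice-integration identity yields $F\in H^2(\Omega)$. Then $J_{\boldsymbol\theta}F^*=(J_{\boldsymbol\theta}F)^*=J_{\boldsymbol\theta}\,(g\circ\boldsymbol\theta)$ a.e., so $F^*=g\circ\boldsymbol\theta$, and your Poisson argument gives $|F|\le M$.

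In both versions, state explicitly that passing from $\mu$-a.e. statements on $\partial\Omega$ to a.e. statements on almost every circle $\{e^{it}\boldsymbol z\}$ uses rotation invariance of $\mu$ and Fubini.

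For comparison, the paper avoids the division problem entirely. Since $p\,h=i'(pf)$ for every polynomial $p$, the bounded operator $M_h$ maps a dense subset of $H^2(\partial\boldsymbol\theta(\Omega))$ into this closed subspace. Hence $f$ is a multiplier of $H^2(\boldsymbol\theta(\Omega))$ with multiplier norm at most $\|h\|_\infty$, and testing the adjoint on kernel functions gives $|f|\le\|h\|_\infty$. That argument stays on the quotient and needs no slicing or circularity. Yours, once repaired, is a more hands-on function-theoretic proof on $\Omega$.
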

\begin{proof}
    Let $f\in H^\infty(\boldsymbol\theta(\Omega)).$ Then $\tilde{f}^* \in H^2(\partial\boldsymbol \theta(\Omega)).$ Since $|\tilde{f}_r(\boldsymbol \zeta)| \leq \|f\|_{\infty, \boldsymbol\theta(\Omega)}$ for all $0<r<1$ and $\boldsymbol \zeta \in \partial\boldsymbol \theta(\Omega),$ passing to the radial limits almost everywhere yields $\|\tilde{f}^*\|_{L^\infty(\partial\boldsymbol \theta(\Omega))} \leq \|f\|_{\infty, \boldsymbol \theta(\Omega)}.$ This confirms that $\tilde{f}^* \in H^\infty(\partial\boldsymbol \theta(\Omega)).$

    Conversely, suppose $h\in H^\infty (\partial \boldsymbol \theta(\Omega)).$ By Proposition \ref{required result 1}, there exists $f\in H^2(\boldsymbol \theta(\Omega))$ such that $i'(f)=h.$ In particular, $$\lim_{r \to 1^-} \tilde{f}_r(\boldsymbol \zeta)=h(\boldsymbol \zeta)\qquad \text{for almost every } \boldsymbol \zeta \in \partial\boldsymbol \theta(\Omega).$$ Consequently, for any polynomial $p$ in $d$-variables, $i'(pf)=p|_{\partial\boldsymbol \theta(\Omega)} h.$
    We show that $f$ is bounded and that its boundary function is $h.$
    Since $h \in L^\infty(\partial\boldsymbol \theta(\Omega))$, the multiplication operator $M_{h}: L^2(\partial\boldsymbol \theta(\Omega)) \to L^2(\partial\boldsymbol \theta(\Omega))$ defined by $M_{h}(g)= h g$ is bounded, with $\|M_{h}\|=\|h\|_{L^\infty(\partial\boldsymbol \theta(\Omega))}.$
    For every polynomial $p$, $$M_{h}(p|_{\partial\boldsymbol \theta(\Omega)})=p|_{\partial\boldsymbol \theta(\Omega)} h= i'(pf)\in H^2(\partial\boldsymbol\theta(\Omega)).$$ 
    As polynomials are dense in $H^2(\partial\boldsymbol \theta(\Omega))$, it follows that $M_{h}(H^2(\partial\boldsymbol \theta(\Omega))) \subseteq H^2(\partial\boldsymbol \theta(\Omega)).$
    Consequently, $T:=i'^{-1} \circ M_{h}\circ i'$ is a bounded operator on $H^2(\boldsymbol \theta(\Omega))$, and $\|T\| \leq \|h\|_{L^\infty(\partial\boldsymbol \theta(\Omega))}.$
    Moreover, for any polynomial $p$, $$fp=i'^{-1}( p|_{\partial\boldsymbol\theta(\Omega)}h)=i'^{-1}(M_{h} (i'(p)))=Tp\in H^2(\boldsymbol \theta(\Omega)).$$ 
    Hence, $f$ is a multiplier on the reproducing kernel Hilbert space $H^2(\boldsymbol \theta(\Omega))$, and the corresponding multiplier operator $\mathcal{M}_f$ is precisely $T$, ensuring $\|\mathcal{M}_f\| \leq \|h\|_{L^\infty(\partial\boldsymbol \theta(\Omega))}.$ 
    By the reproducing property, for any $\boldsymbol \zeta\in \boldsymbol \theta(\Omega)$, we have $\mathcal{M}_f^*(\mathbb S_{\boldsymbol \theta(\Omega)}(\cdot, \boldsymbol{\zeta}))=\overline{f(\boldsymbol \zeta)} \mathbb S_{\boldsymbol \theta(\Omega)}(\cdot, \boldsymbol{\zeta}),$ where $\mathbb S_{\boldsymbol \theta(\Omega)}(\cdot, \boldsymbol{\zeta})\in H^2(\boldsymbol \theta(\Omega))$ is the reproducing kernel at $\boldsymbol \zeta$, given by $\mathbb S_{\boldsymbol \theta(\Omega)}(\cdot, \boldsymbol{\zeta})(\boldsymbol \eta)=\mathbb S_{\boldsymbol \theta(\Omega)}(\boldsymbol \eta, \boldsymbol{\zeta})$ for all $\boldsymbol \eta\in \boldsymbol \theta(\Omega).$ 
    Therefore, $$|f(\boldsymbol \zeta)| \leq \frac{\|\mathcal{M}_f^*(\mathbb S_{\boldsymbol \theta(\Omega)}(\cdot, \boldsymbol{\zeta}))\|}{\|\mathbb S_{\boldsymbol \theta(\Omega)}(\cdot, \boldsymbol{\zeta})\|}\leq \|\mathcal{M}_f\| \leq \|h\|_{L^\infty(\partial\boldsymbol \theta(\Omega))}.$$ 
    Thus, $f$ is bounded and holomorphic on $\boldsymbol \theta (\Omega)$ with $\|f\|_{\infty, \boldsymbol \theta(\Omega)} \leq \|h\|_{L^\infty(\partial\boldsymbol \theta(\Omega))}.$
    On the other hand, the fact that $\lim_{r \to 1^-} \tilde{f}_r(\boldsymbol \zeta)=h(\boldsymbol \zeta)$  almost everywhere on $\boldsymbol \zeta \in \partial\boldsymbol \theta(\Omega)$ yields that $\|h\|_{L^\infty(\partial\boldsymbol \theta(\Omega))} \leq \|f\|_{\infty, \boldsymbol \theta(\Omega)}.$
    This establishes the isometric equality and completes the proof.
    \end{proof}

\section{Commutant lifting}\label{S3}
In this section, we characterize contractive module maps on quotient modules of \(H^2(\boldsymbol \theta(\Omega))\) that admit Schur-class lifts. 
More precisely, given a quotient module $\mathcal Q$ of $H^2(\boldsymbol \theta(\Omega))$, we classify all contractive module maps on $\mathcal Q$ that arise as compressions $S_{\varphi}=P_{\mathcal{Q}}T_{\varphi}|_{\mathcal{Q}}$ for some $\varphi\in \mathcal S(\boldsymbol \theta(\Omega))$.

By Proposition \ref{required result 1}, the Hardy space $H^2(\boldsymbol \theta(\Omega))$ is canonically identified with its boundary realization $H^2(\partial\boldsymbol \theta(\Omega)).$ 
Accordingly, we make no distinction between a function in $H^2(\boldsymbol \theta(\Omega))$ and its boundary realization unless necessary. Thus, every quotient module of $H^2(\boldsymbol \theta(\Omega))$ may be regarded as a closed subspace of $H^2(\partial\boldsymbol \theta(\Omega))$.
Henceforth, $\mathcal Q$ denotes a nonzero quotient module of $H^2(\partial\boldsymbol \theta(\Omega))$. 
Since $\mathcal{Q}^\perp$ is a submodule, the nontriviality of $\mathcal{Q}$ implies that $P_{\mathcal Q}1\neq 0.$
Consequently, \beq \label{Q description}\mathcal Q = \overline{\operatorname{span}}\left\{P_{\mathcal Q}\boldsymbol \zeta^\alpha P_{\mathcal Q}1: \alpha\in \mathbb Z_+^d\right\}.\eeq
For a closed subspace $\mathcal E\subseteq L^2(\partial\boldsymbol \theta(\Omega))$, set
$\mathcal E^{conj}=\{\bar{f}:\, f\in \mathcal E\}.$
Further, let $$H_0^2(\partial\boldsymbol \theta(\Omega))=\left\{f\in H^2(\partial\boldsymbol \theta(\Omega)):\langle f,1\rangle_{H^2(\partial\boldsymbol \theta(\Omega))}=0\right\}.$$
Under the boundary identification, this corresponds to the subspace of functions in $H^2(\boldsymbol \theta(\Omega))$ vanishing at the origin.
The following orthogonality relation will be used repeatedly.
\begin{lemma}\label{orthogonality}
    $H^2_0(\partial\boldsymbol \theta(\Omega))\perp H^2(\partial\boldsymbol \theta(\Omega))^{conj}.$
\end{lemma}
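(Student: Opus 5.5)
We must show that for $f\in H^2_0(\partial\boldsymbol\theta(\Omega))$ and $g\in H^2(\partial\boldsymbol\theta(\Omega))$ we have $\langle f,\bar g\rangle_{L^2(\partial\boldsymbol\theta(\Omega))}=0$, that is, $\int_{\partial\boldsymbol\theta(\Omega)} fg\,d\mu_{\boldsymbol\theta}=0$. By continuity of the inner product it suffices to check this on dense subsets. Proposition \ref{required result 1} together with density of polynomials in $H^2(\boldsymbol\theta(\Omega))$ shows that polynomials restricted to $\partial\boldsymbol\theta(\Omega)$ are dense in $H^2(\partial\boldsymbol\theta(\Omega))$. Moreover, $H^2_0$ is the orthogonal complement of the constants, so $P_{H^2_0}p=p-\langle p,1\rangle$ for a polynomial $p$. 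Hence polynomials $p$ with $\langle p,1\rangle=0$ are dense in $H^2_0(\partial\boldsymbol\theta(\Omega))$. It therefore suffices to prove $\int p\,q\,d\mu_{\boldsymbol\theta}=0$ for polynomials $p,q$ with $\langle p,1\rangle=0$.

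\emph{Reduction to the constant term.} Since $pq$ is a polynomial, $\int pq\,d\mu_{\boldsymbol\theta}=\langle pq,1\rangle_{H^2(\partial\boldsymbol\theta(\Omega))}$. We now compute $\langle h,1\rangle$ for a polynomial $h$ via $\Gamma_h$:
\[
\langle h,1\rangle=\frac{1}{c_{\boldsymbol\theta}^2}\int_{\partial\Omega}(h\circ\boldsymbol\theta)\,|J_{\boldsymbol\theta}|^2\,d\mu .
\]
The measure $\mu$ is $\mathbb K$-invariant, and $\mathbb K$ contains the circle rotations $\z\mapsto e^{it}\z$ because $\Omega$ is circled. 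Hence $\int_{\partial\Omega} F\,d\mu=0$ whenever $F$ is a sum of terms $u\bar v$ with $u,v$ homogeneous polynomials of different degrees. Write $J_{\boldsymbol\theta}$ as a homogeneous polynomial of degree $m$. Write $h\circ\boldsymbol\theta=\sum_{n\ge0}H_n$ as a finite sum of homogeneous parts, with $H_0=h(0)$. Then
\[
\int (h\circ\boldsymbol\theta)\,J_{\boldsymbol\theta}\overline{J_{\boldsymbol\theta}}\,d\mu=\sum_n\int H_nJ_{\boldsymbol\theta}\overline{J_{\boldsymbol\theta}}\,d\mu ,
\]
and every term with $n\ge1$ vanishes because $H_nJ_{\boldsymbol\theta}$ has degree $n+m\neq m$. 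Therefore
\[
\langle h,1\rangle=h(0)\,c_{\boldsymbol\theta}^{-2}\|J_{\boldsymbol\theta}\|^2=h(0).
\]

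\emph{Conclusion.} Apply this with $h=p$: the condition $\langle p,1\rangle=0$ gives $p(0)=0$. Apply it again with $h=pq$:
\[
\int pq\,d\mu_{\boldsymbol\theta}=(pq)(0)=p(0)q(0)=0.
\]
Pass to limits using the bound
\[
\Bigl|\int fg\,d\mu_{\boldsymbol\theta}\Bigr|\le\|f\|_{L^2}\|g\|_{L^2}
\]
(Cauchy--Schwarz). If $p_n\to f$ and $q_n\to g$ in $L^2$, this bound gives $\int p_nq_n\to\int fg$, which completes the argument.

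The main point needing care is the rotation-invariance step. We must confirm that $e^{it}I\in\mathbb K$ for a bounded symmetric domain in its standard circled realization; this holds because the domain is circled. We must also confirm that the Shilov boundary is invariant under these rotations, so that the homogeneity argument is legitimate on $\partial\Omega$. Everything else is a density and continuity argument.
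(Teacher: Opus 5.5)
Your proof is correct. It shares the paper's core computation: circle invariance of $\mu$, combined with homogeneity of $J_{\boldsymbol\theta}$, annihilates every non-constant homogeneous part against the weight $|J_{\boldsymbol\theta}|^2$. What differs is the approximation scheme.

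The paper works with arbitrary $f\in H^2_0$, $g\in H^2$ directly:
\begin{itemize}
\item it writes the boundary pairing as the limit of $\int \tilde f_r\tilde g_r\,d\mu_{\boldsymbol\theta}$, using $L^1$ convergence of the dilated products;
\item for fixed $r$ it expands $\z\mapsto f(\boldsymbol\theta(r\z))g(\boldsymbol\theta(r\z))$, which is holomorphic near $\overline{\Omega}$, into an infinite homogeneous series, obtaining $c_{\boldsymbol\theta}^2 f(0)g(0)$;
\item it then uses the asserted (not proved) fact that $H^2_0$ consists of the functions vanishing at the origin.
\end{itemize}

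You instead approximate by polynomials, using the density quoted in Section \ref{S2} together with Proposition \ref{required result 1}. This buys you two things.
\begin{itemize}
\item Only finite homogeneous sums occur, so no interchange of series and integral needs justification.
\item Your identity $\langle h,1\rangle=h(0)$ for polynomials $h$ extends by continuity (point evaluation is bounded, polynomials are dense) to all of $H^2$. It therefore actually proves the identification of $H^2_0$ with the functions vanishing at $0$, which the paper takes for granted.
\end{itemize}
The price is reliance on the external polynomial-density result, whereas the paper's dilation argument runs on the boundary-value theory of Section \ref{S2}.

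One cosmetic point: with the paper's normalization \eqref{eq 5}, $\int pq\,d\mu_{\boldsymbol\theta}=c_{\boldsymbol\theta}^2\langle pq,1\rangle$ rather than $\langle pq,1\rangle$. This is harmless, since you only need the value $0$.

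Your concern about rotation invariance of the Shilov boundary is not a gap. The maps $\z\mapsto e^{it}\z$ are linear automorphisms of $\overline{\Omega}$, so they preserve $\partial\Omega$. The paper relies on the same fact when it invokes rotation invariance of $\mu$.
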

\begin{proof}
Let $\tilde f^*\in H_0^2(\partial\boldsymbol \theta(\Omega))$ and $\tilde g^*\in H^2(\partial\boldsymbol \theta(\Omega))$, and let $f\in H^2_0(\boldsymbol \theta(\Omega))$ and $g\in H^2(\boldsymbol \theta(\Omega))$ denote the associated interior functions. Since $\tilde f_r\tilde g_r \longrightarrow \tilde f^*\tilde g^*$ in $L^1(\partial\boldsymbol \theta(\Omega))$ as $r\to 1^-,$
it follows that \beq \label{eq 6}\langle \tilde f^*,\overline{\tilde g^*}\rangle_{L^2(\partial\boldsymbol \theta(\Omega))} = \lim_{r\to1^-} \frac{1}{c_{\boldsymbol \theta}^2}\int_{\partial\boldsymbol \theta(\Omega)} \tilde f_r\tilde g_r\, d\mu_{\boldsymbol \theta}=\frac{1}{c_{\boldsymbol \theta}^2}\lim_{r\to 1^-} \int_{\partial\Omega} |J_{\boldsymbol \theta}|^2 f(\boldsymbol \theta(r\z))g(\boldsymbol \theta(r\z)) d\mu(z).\eeq 
For a fixed $0<r<1$, define $H(\z)=f(\boldsymbol \theta(r\z))g(\boldsymbol \theta(r\z))$. Since the map $ \Omega\ni \z \mapsto H(\z)$ is holomorphic in a neighborhood of $\overline{\Omega},$ it admits a homogeneous expansion
 $H(\z)=\sum_{j=0}^\infty H_j(\z)$, where $H_0(\z)=f(\boldsymbol \theta(0)) g(\boldsymbol \theta(0)).$ Then $$\int_{\partial\Omega} H(\z)|J_{\boldsymbol \theta}|^2 d\mu(\z)= \sum_{j=0}^\infty \int_{\partial\Omega} H_j(\z) |J_{\boldsymbol \theta}(\z)|^2 d\mu(\z).$$
Since $\mu$ is rotation invariant and $J_{\boldsymbol \theta}$ is a homogeneous polynomial, $|J_{\boldsymbol \theta}(e^{it} \z)|=|J_{\boldsymbol \theta}(\z)|$ and $H_j(e^{it}\z)=e^{ijt} H_{j}(\z).$ 
Thus, we have $$\int_{\partial \Omega} H_{j}(\z)|J_{\boldsymbol \theta}(\z)|^2 d\mu(\z)=\begin{cases}
    c_{\boldsymbol\theta}^2 f(\boldsymbol\theta(0))g(\boldsymbol\theta(0)), \quad j=0\\
    0, \qquad\qquad\qquad\;\;\;\;\;\; j\neq 0
\end{cases}.$$
Consequently, $$\int_{\partial\Omega} H(\z)|J_{\boldsymbol \theta}|^2 d\mu(\z)=c_{\boldsymbol \theta}^2 f(\boldsymbol \theta(0)) g(\boldsymbol \theta(0)).$$
As $f(0)=0$ and $\boldsymbol \theta(0)=0$, \eqref{eq 6} yields $\langle \tilde f^*,\overline{\tilde g^*}\rangle=0.$
\end{proof}

It follows from Lemma \ref{orthogonality} that 
 $$ H^2(\partial\boldsymbol\theta(\Omega)) + H^2(\partial\boldsymbol\theta(\Omega))^{conj} = H_0^2(\partial\boldsymbol\theta(\Omega)) \oplus H^2(\partial\boldsymbol\theta(\Omega))^{conj}$$ is a closed subspace of $L^2(\partial\boldsymbol\theta(\Omega))$. 
Define $$\mathcal M_{\boldsymbol\theta(\Omega)} = L^2(\partial\boldsymbol\theta(\Omega)) \ominus \big( H^2(\partial\boldsymbol\theta(\Omega)) + H^2(\partial\boldsymbol\theta(\Omega))^{conj} \big).$$
By construction, the spaces $H^2_0(\partial\boldsymbol\theta(\Omega)),\;\mathcal M_{\boldsymbol\theta(\Omega)},$ and $\mathcal{Q}^{conj}$ are mutually orthogonal in $L^2(\partial\boldsymbol\theta(\Omega)).$ 
Accordingly, we define their algebraic direct sum in $L^1(\partial\boldsymbol\theta(\Omega))$: 
\beq \label{eq 14} \mathcal M_{\mathcal Q} =\mathcal Q^{conj} \dotplus \mathcal M_{\boldsymbol\theta(\Omega)} \dotplus H_0^2(\partial\boldsymbol\theta(\Omega)).\eeq 
We now establish our first characterization of commutant lifting on quotient modules of $H^2(\boldsymbol\theta(\Omega))$ (c.f \cite[Theorem 4.1]{DeS2026}).
\begin{theorem}\label{1st characterization}
    Let $\mathcal{Q}\subseteq  H^2(\partial\boldsymbol\theta(\Omega))$ be a quotient module and let $X\in \mathcal{B}_1(\mathcal{Q})$ be a contractive module map. Then $X$ admits a Schur-class lift if and only if the functional $X_{\mathcal{Q}}: \left(\mathcal{M}_{\mathcal{Q}}, \|\cdot\|_{L^1({\partial\boldsymbol\theta(\Omega)})}\right) \to \mathbb C$ defined by \beq \label{eq 11} X_{\mathcal{Q}}(f)=\int_{\partial\boldsymbol\theta(\Omega)}\psi f d\mu_{\boldsymbol\theta},\qquad f\in \mathcal{M}_{\mathcal{Q}}\eeq is contractive, where $\psi=XP_{\mathcal{Q}}1.$
\end{theorem}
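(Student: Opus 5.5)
The plan is a Hahn--Banach/duality argument, with $L^\infty(\partial\boldsymbol\theta(\Omega))$ identified as the dual of $L^1(\partial\boldsymbol\theta(\Omega))$ via the pairing $\int gf\,d\mu_{\boldsymbol\theta}$. I normalize so that the $L^1$ norm and this pairing are computed with respect to the same measure; the constant $c_{\boldsymbol\theta}$ should be absorbed consistently in both.

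\emph{Necessity.} Suppose $X=S_\varphi$ with $\varphi\in\mathcal S(\boldsymbol\theta(\Omega))$. By Proposition \ref{required result 2}, the boundary function satisfies $\|\varphi\|_{L^\infty}\le 1$. It then suffices to show that $\int\psi f\,d\mu_{\boldsymbol\theta}=\int\varphi f\,d\mu_{\boldsymbol\theta}$ for every $f\in\mathcal M_{\mathcal Q}$, since the latter integral is bounded by $\|f\|_{L^1}$.

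Write $\psi=P_{\mathcal Q}(\varphi P_{\mathcal Q}1)$, so that $\varphi P_{\mathcal Q}1-\psi\in\mathcal Q^\perp$. I check the three summands of $\mathcal M_{\mathcal Q}$ separately.
\begin{itemize}
\item For $f=\bar q$ with $q\in\mathcal Q$, the integrals are $\langle\psi,q\rangle$ and $\langle\varphi,q\rangle$, up to normalization. Since $\mathcal Q^\perp$ is a submodule, $1-P_{\mathcal Q}1\in\mathcal Q^\perp$, and therefore $\varphi(1-P_{\mathcal Q}1)\in\mathcal Q^\perp$. Hence $\langle\varphi,q\rangle=\langle\varphi P_{\mathcal Q}1,q\rangle=\langle\psi,q\rangle$.
\item For $f\in H^2_0$, both $\psi f$ and $\varphi f$ pair to $\langle f,\bar\psi\rangle$ and $\langle f,\bar\varphi\rangle$. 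Both vanish by Lemma \ref{orthogonality}.
\item For $f\in\mathcal M_{\boldsymbol\theta(\Omega)}$, we have $\int\psi f=\langle f,\bar\psi\rangle=0$ because $\bar\psi\in (H^2)^{conj}$. Similarly $\int\varphi f=0$.
\end{itemize}

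\emph{Sufficiency.} Suppose $X_{\mathcal Q}$ is contractive on $\mathcal M_{\mathcal Q}$. By Hahn--Banach, extend it to a functional on $L^1$ of norm at most $1$. This gives some $\varphi\in L^\infty(\partial\boldsymbol\theta(\Omega))$ with $\|\varphi\|_\infty\le1$ and $\int\varphi f=\int\psi f$ for all $f\in\mathcal M_{\mathcal Q}$.

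The next step is to show that $\varphi\in H^2(\partial\boldsymbol\theta(\Omega))$. First, $\varphi-\psi$ annihilates $\mathcal M_{\boldsymbol\theta(\Omega)}$ and $H^2_0$. Since $\psi\in H^2$ already annihilates both of these, $\varphi$ does too. Hence $\bar\varphi$ is orthogonal to $\mathcal M_{\boldsymbol\theta(\Omega)}$ and to $H^2_0$. This places $\bar\varphi$ in $H^2_0\oplus(H^2)^{conj}$ orthogonally complemented against $H^2_0$, that is, $\bar\varphi\in (H^2)^{conj}$. So $\varphi\in H^2\cap L^\infty=H^\infty(\partial\boldsymbol\theta(\Omega))$, and Proposition \ref{required result 2} yields a Schur function $\varphi$.

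Moreover, $\langle\varphi-\psi,q\rangle=0$ for all $q\in\mathcal Q$. Combined with the first bullet above, this gives $S_\varphi P_{\mathcal Q}1=P_{\mathcal Q}(\varphi P_{\mathcal Q}1)=P_{\mathcal Q}\varphi=\psi=XP_{\mathcal Q}1$.

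Finally, $S_\varphi$ and $X$ are both module maps, so they commute with $S_{\zeta^\alpha}=P_{\mathcal Q}\boldsymbol\zeta^\alpha|_{\mathcal Q}$. Since compression to a co-invariant subspace is multiplicative, the two operators agree on each $P_{\mathcal Q}\boldsymbol\zeta^\alpha P_{\mathcal Q}1$. These vectors span $\mathcal Q$ by \eqref{Q description}, so by boundedness $X=S_\varphi$.

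\emph{Main obstacle.} The delicate step is identifying the extension $\varphi$ as analytic. This relies on the orthogonal decomposition $L^2=H^2_0\oplus(H^2)^{conj}\oplus\mathcal M_{\boldsymbol\theta(\Omega)}$ together with $\mathcal M_{\mathcal Q}$ containing exactly the pieces $H^2_0$ and $\mathcal M_{\boldsymbol\theta(\Omega)}$ needed to force this. It also requires care with complex versus bilinear pairings and with the normalizing constants.
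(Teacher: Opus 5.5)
Your proposal is correct and follows the paper's proof essentially step by step: the same three-summand check for necessity, the same Hahn--Banach extension with $L^1$--$L^\infty$ duality, the same use of the orthogonal decomposition of $L^2(\partial\boldsymbol\theta(\Omega))$ to show the extension $\varphi$ is analytic, and the same cyclicity argument via \eqref{Q description}. The differences are cosmetic. You show $\overline{\varphi}\in H^2(\partial\boldsymbol\theta(\Omega))^{conj}$ using $L^2=H^2_0(\partial\boldsymbol\theta(\Omega))\oplus H^2(\partial\boldsymbol\theta(\Omega))^{conj}\oplus\mathcal M_{\boldsymbol\theta(\Omega)}$, whereas the paper works with $\varphi$ and the conjugated decomposition, which needs the (true) fact $\mathcal M_{\boldsymbol\theta(\Omega)}=\mathcal M_{\boldsymbol\theta(\Omega)}^{conj}$; your insistence on normalizing the pairing and the $L^1$ norm by the same measure is also a fair tightening of the paper's loose handling of $c_{\boldsymbol\theta}$.
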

\begin{proof}
    Assume first that $X\in \mathcal{B}_1(\mathcal{Q})$ admits a Schur-class lift. Then there exists $\varphi\in \mathcal{S}(\boldsymbol\theta(\Omega))$ such that $X=S_{\varphi}=P_{\mathcal{Q}} T_{\varphi}|_{\mathcal{Q}}.$
    Since $\varphi\in H^\infty(\boldsymbol\theta(\Omega))$ and $\mathcal{Q}^\perp$ is a submodule, it follows that $\varphi\mathcal{Q}^\perp \subseteq \mathcal{Q}^\perp.$ Hence, \beq \label{eq 7} P_{\mathcal{Q}}\varphi= P_{\mathcal{Q}}T_\varphi P_{\mathcal{Q}}1+ P_{\mathcal{Q}}T_\varphi P_{\mathcal{Q}^\perp}1=P_{\mathcal{Q}}T_{\varphi}P_{\mathcal{Q}}1=S_{\mathcal{\varphi}} P_{\mathcal{Q}}1=X P_{\mathcal{Q}}1=\psi.\eeq
Define the functional $\chi_{\varphi}: L^1(\partial\boldsymbol\theta(\Omega)) \to \mathbb C$ by 
$$\chi_{\varphi}(f)=\int_{\partial\boldsymbol\theta(\Omega)} \varphi f d\mu_{\boldsymbol\theta} = \inp{\varphi}{\bar{f}}_{L^2(\partial\boldsymbol\theta(\Omega))},\qquad f\in L^1(\partial\boldsymbol\theta(\Omega)).$$
Since $\varphi \in \mathcal{S}(\boldsymbol\theta(\Omega))$, $\|\chi_{\varphi}\| = \| \varphi\|_{L^\infty(\partial\boldsymbol\theta(\Omega))} \leq 1.$ We claim that $\chi_{\varphi}|_{\mathcal{M}_{\mathcal{Q}}}=X_{\mathcal{Q}}.$ Indeed, this follows by checking separately on the three summands of $\mathcal{M}_{\mathcal{Q}}$ in \eqref{eq 14}.
 
 If $\bar{f}\in \mathcal{Q}^{conj}$, then
\beqn X_{\mathcal{Q}}(\bar{f}) = \int_{\partial\boldsymbol\theta(\Omega)} \psi \bar{f} d\mu_{\boldsymbol \theta} = \inp{\psi}{f}_{H^2(\partial\boldsymbol\theta(\Omega))} \overset{\eqref{eq 7}}{=} \inp{P_{\mathcal{Q}} \varphi}{f}_{H^2(\partial\boldsymbol\theta(\Omega))}
= \inp{\varphi}{f}_{L^2(\partial\boldsymbol\theta(\Omega))} = \chi_{\varphi}(\bar{f}).\eeqn
Note that $\mathcal{M}_{\boldsymbol\theta(\Omega)} = \mathcal{M}_{\boldsymbol\theta(\Omega)}^{conj}$, so if $f\in \mathcal{M}_{\boldsymbol\theta(\Omega)}$, its conjugate $\bar{f}$ is also in $\mathcal{M}_{\boldsymbol\theta(\Omega)}$. Since $\varphi, \psi \in H^2(\partial\boldsymbol\theta(\Omega))$ are orthogonal to $\mathcal{M}_{\boldsymbol\theta(\Omega)}$, 
$$ X_{\mathcal{Q}}(f) = \inp{\psi}{\bar{f}}_{L^2(\partial\boldsymbol\theta(\Omega))} = 0 = \inp{\varphi}{\bar{f}}_{L^2(\partial\boldsymbol\theta(\Omega))} = \chi_{\varphi}(f) \qquad \text{for all} \; f\in \mathcal{M}_{\boldsymbol\theta(\Omega)}. $$
Finally, if $f\in H^2_{0}(\partial\boldsymbol\theta(\Omega))$,
then Lemma \ref{orthogonality} yields 
$$ X_{\mathcal{Q}}(f)  = \overline{\inp{\overline{\psi}}{f}_{L^2(\partial\boldsymbol\theta(\Omega))}} = 0 = \overline{\inp{\overline{\varphi}}{f}_{L^2(\partial\boldsymbol\theta(\Omega))}} = \chi_{\varphi}(f). $$
Hence, $X_{\mathcal{Q}} = \chi_{\varphi}|_{\mathcal{M}_{\mathcal{Q}}}$, and therefore $\|X_{\mathcal{Q}}\| \leq \|\chi_{\varphi}\| \leq 1.$

Conversely, suppose $X_{\mathcal{Q}}$ is contractive on $\mathcal{M}_{\mathcal{Q}}.$ By the Hahn-Banach theorem, there exists a bounded linear functional $\widetilde{X}_{\mathcal{Q}}: L^1(\partial\boldsymbol\theta(\Omega))\to \mathbb C$ such that $\widetilde{X}_{\mathcal{Q}}|_{\mathcal{M}_{\mathcal{Q}}} = X_{\mathcal{Q}}$ and $\|\widetilde{X}_{\mathcal{Q}}\| = \|X_{\mathcal{Q}}\| \leq 1.$ 
By duality, 
there exists $\varphi \in L^\infty(\partial\boldsymbol\theta(\Omega))$ such that $$\widetilde{X}_{\mathcal{Q}}(f) = \int_{\partial\boldsymbol\theta(\Omega)} \varphi f d\mu_{\boldsymbol \theta}=\inp{\varphi}{\bar{f}}_{L^2(\partial\boldsymbol\theta(\Omega))}, \qquad f \in L^1(\partial\boldsymbol\theta(\Omega)),$$
with $\|\varphi\|_{L^\infty(\partial\boldsymbol\theta(\Omega))} = \|\widetilde{X}_{\mathcal{Q}}\| \leq 1.$

We claim that $\varphi\in H^2(\partial\boldsymbol\theta(\Omega)).$ Since $$L^2(\partial\boldsymbol\theta(\Omega)) = H^2(\partial\boldsymbol\theta(\Omega)) \oplus H^2_0(\partial\boldsymbol\theta(\Omega))^{conj} \oplus \mathcal{M}_{\boldsymbol\theta(\Omega)},$$ it suffices to show $\varphi$ is orthogonal to the last two summands. If $f\in \mathcal{M}_{\boldsymbol\theta(\Omega)},$ then 
$$ \inp{\varphi}{f}_{L^2(\partial\boldsymbol\theta(\Omega))} = \widetilde{X}_{\mathcal{Q}}(\bar{f}) = X_{\mathcal{Q}}(\bar{f}) =  \inp{\psi}{f}_{L^2(\partial\boldsymbol\theta(\Omega))} = 0. $$
Similarly, if $\bar{h}\in H^2_0(\partial\boldsymbol\theta(\Omega))^{conj},$ then by Lemma \ref{orthogonality}
$$ \inp{\varphi}{\bar{h}}_{L^2(\partial\boldsymbol\theta(\Omega))} = \widetilde{X}_{\mathcal{Q}}(h) = X_{\mathcal{Q}}(h) = \overline{\inp{\overline{\psi}}{h}_{L^2(\partial\boldsymbol\theta(\Omega))}} = 0. $$
 Hence, $\varphi \in H^\infty(\partial\boldsymbol\theta(\Omega)).$ Since $\|\varphi\|_{L^\infty(\partial\boldsymbol\theta(\Omega))}\leq 1$, Proposition~\ref{required result 2} shows that $\varphi$ is the boundary realization of a Schur function on $\boldsymbol\theta(\Omega),$ which we continue to denote by $\varphi.$ 

Finally, we show that $X=S_{\varphi}.$ For any $f\in \mathcal{Q}$,
$$ \inp{\varphi}{f}_{H^2(\partial\boldsymbol\theta(\Omega))} = \widetilde{X}_{\mathcal{Q}}(\bar{f}) = X_{\mathcal{Q}}(\bar{f})= \inp{\psi}{f}_{H^2(\partial\boldsymbol\theta(\Omega))}. $$
Hence $P_{\mathcal{Q}} \varphi = \psi=XP_{\mathcal{Q}}1.$
Since $\mathcal{Q}^\perp$ is a submodule,
$$ XP_{\mathcal{Q}}1= P_{\mathcal{Q}}\varphi = P_{\mathcal{Q}} T_{\varphi}(P_{\mathcal{Q}}1 + P_{\mathcal{Q}^\perp}1) = P_{\mathcal{Q}} T_{\varphi}P_{\mathcal{Q}}1 = S_{\varphi} P_{\mathcal{Q}}1. $$
As $X$ commutes with compressed coordinate multipliers, $XS_{ \boldsymbol \zeta}^\alpha=S_{ \boldsymbol \zeta}^\alpha X$ for all $\alpha\in \mathbb Z_+^d.$ 
Therefore, 
\beqn
    S_{\varphi}(P_{\mathcal{Q}} \boldsymbol \zeta^\alpha P_{\mathcal{Q}}1) = S_{\boldsymbol \zeta}^\alpha S_{\varphi} P_{\mathcal{Q}}1 
    &= S_{\boldsymbol \zeta}^\alpha X P_{\mathcal{Q}}1 = X S_{\boldsymbol \zeta}^\alpha P_{\mathcal{Q}}1 = X (P_{\mathcal{Q}} \boldsymbol \zeta^\alpha P_{\mathcal{Q}}1).
\eeqn
In view of \eqref{Q description}, we conclude that $X=S_{\varphi}.$ This completes the proof of the theorem.
\end{proof}

\begin{remark}
The contractivity assumption on $X$ in Theorem \ref{1st characterization} can be relaxed to simple boundedness. More precisely, if $X\in \mathcal{B}(\mathcal{Q})$ is a bounded module map such that the associated functional $X_{\mathcal{Q}}$ on $\mathcal{M}_{\mathcal{Q}}$, defined by \eqref{eq 11}, is contractive, then $X$ admits a Schur-class lift. In particular, $X$ is then necessarily contractive.
\end{remark}

Building on Theorem~\ref{1st characterization}, we next reformulate the contractivity of the functional $X_{\mathcal{Q}}$ as a distance condition in $L^1(\partial\boldsymbol\theta(\Omega)).$ 
Recall that $X_{\mathcal{Q}}$ is completely determined by the vector $\psi=XP_{\mathcal{Q}}1 \in \mathcal{Q}.$ It follows from the proof of Theorem \ref{1st characterization} that $X_{\mathcal{Q}}$ vanishes on $\mathcal{M}_{\boldsymbol\theta(\Omega)}\dotplus H^2_0(\partial\boldsymbol\theta(\Omega))$ and on the orthogonal complement of $\overline{\psi}$ in $\mathcal{Q}^{conj}.$
Hence, the kernel of $X_{\mathcal{Q}}$ is given by
$$\widetilde{\mathcal{M}}_{\mathcal{Q}, X}:=\mathrm{ker}X_{\mathcal{Q}}=(\mathcal{Q}^{conj}\ominus \mathrm{span}\{\overline{\psi}\}) \dotplus \mathcal{M}_{\boldsymbol\theta(\Omega)}\dotplus H^2_{0}(\partial\boldsymbol\theta(\Omega)).$$ 
This leads to the following geometric characterization of commutant lifting. 
\begin{theorem}\label{2nd characterization}
    Let $\mathcal{Q}\subseteq H^2(\partial\boldsymbol\theta(\Omega))$ be a quotient module and let $X\in \mathcal{B}_1(\mathcal{Q})$ be a contractive module map. Set
$\psi=XP_{\mathcal Q}1.$ Then
    $X$ admits a Schur-class lift if and only if \beq \label{eq 15} \dist_{L^1(\partial\boldsymbol\theta(\Omega))}\left( \frac{\overline{\psi}}{\|\psi\|^2_{L^2(\partial\boldsymbol\theta(\Omega))}}, \widetilde{\mathcal{M}}_{\mathcal{Q}, X}\right) \geq 1.\eeq
\end{theorem}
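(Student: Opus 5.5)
The plan is to reduce Theorem \ref{2nd characterization} to Theorem \ref{1st characterization}. By that theorem, $X$ admits a Schur-class lift if and only if the functional $X_{\mathcal Q}$ on $(\mathcal M_{\mathcal Q},\|\cdot\|_{L^1})$ is contractive. So it suffices to show that this contractivity is equivalent to the distance inequality \eqref{eq 15}. The tool is the standard Banach space fact: if $\Lambda$ is a nonzero bounded functional on a normed space $E$ with kernel $K$, and $e\in E$ satisfies $\Lambda(e)=1$, then
$$\|\Lambda\|=\frac{1}{\dist(e,K)}.$$
I would prove this quickly. Every $f\in E$ decomposes as $f=\Lambda(f)e+(f-\Lambda(f)e)$ with the second term in $K$. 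Hence $\|f\|\geq |\Lambda(f)|\,\dist(e,K)$, which gives $\|\Lambda\|\le 1/\dist(e,K)$. Conversely, $1=|\Lambda(e-k)|\le \|\Lambda\|\,\|e-k\|$ for every $k\in K$, which gives the reverse inequality.

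The steps, in order, are as follows.

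\textbf{Degenerate case $\psi=0$.} The quotient in \eqref{eq 15} is then undefined. Here $X_{\mathcal Q}\equiv 0$, so $X_{\mathcal Q}$ is trivially contractive and $X$ lifts (indeed $X=0=S_0$). I will state that \eqref{eq 15} is understood for $\psi\neq 0$, or interpreted vacuously otherwise.

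\textbf{Identify $e$.} Assume $\psi\neq0$ and set $e=\overline{\psi}/\|\psi\|_{L^2}^2$. Since $\psi\in\mathcal Q$, we have $e\in\mathcal Q^{conj}\subseteq\mathcal M_{\mathcal Q}$. Moreover
$$X_{\mathcal Q}(e)=\frac{1}{\|\psi\|^2}\int_{\partial\boldsymbol\theta(\Omega)}\psi\overline{\psi}\,d\mu_{\boldsymbol\theta}.$$
This equals $1$, provided the $L^2$ norm is taken with respect to $\mu_{\boldsymbol\theta}$ with the same normalization as the integral defining $X_{\mathcal Q}$. The normalization constant $c_{\boldsymbol\theta}$ in \eqref{eq 5} needs care: I would check that the pairing in \eqref{eq 11} and the norm in \eqref{eq 15} match, as they implicitly do in the proof of Theorem \ref{1st characterization}, where $\int\psi\bar f\,d\mu_{\boldsymbol\theta}$ is identified with $\inp{\psi}{f}$.

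\textbf{Confirm the kernel.} Next, confirm that $\ker X_{\mathcal Q}=\widetilde{\mathcal M}_{\mathcal Q,X}$. The proof of Theorem \ref{1st characterization} already shows that $X_{\mathcal Q}$ vanishes on $\mathcal M_{\boldsymbol\theta(\Omega)}$ and on $H^2_0(\partial\boldsymbol\theta(\Omega))$. On $\mathcal Q^{conj}$ we have $X_{\mathcal Q}(\bar f)=\inp{\psi}{f}$, which vanishes exactly on $\mathcal Q^{conj}\ominus\operatorname{span}\{\overline\psi\}$. Now write a general element of $\mathcal M_{\mathcal Q}$ as $\bar f+m+h$, and decompose $\bar f=c\,\overline{\psi}+\bar f_0$ with $\bar f_0\in \mathcal Q^{conj}\ominus\operatorname{span}\{\overline\psi\}$. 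Then $X_{\mathcal Q}(\bar f+m+h)=c\|\psi\|^2$, so the kernel is exactly the stated algebraic sum. Here the distance is taken inside $L^1$ from $e$ to this (not necessarily closed) subspace. Closure does not affect the distance, so the lemma applies verbatim with $E=\mathcal M_{\mathcal Q}$.

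\textbf{Conclude.} Combining the two previous steps, $\|X_{\mathcal Q}\|\le1$ if and only if $\dist_{L^1}(e,\widetilde{\mathcal M}_{\mathcal Q,X})\ge1$. Theorem \ref{1st characterization} then finishes the proof.

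The main obstacle is only bookkeeping. One issue is the normalization ensuring $X_{\mathcal Q}(e)=1$. The other is verifying that the direct-sum decomposition of $\mathcal M_{\mathcal Q}$ is genuinely algebraic, i.e.\ unique, so that the coefficient $c$ is well defined. Uniqueness follows from the mutual $L^2$-orthogonality of the three summands, all of which lie in $L^2\subseteq L^1$.
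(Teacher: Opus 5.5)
Your proposal is correct and is essentially the paper's argument. The paper also reduces to Theorem~\ref{1st characterization} by writing each $f\in\mathcal M_{\mathcal Q}$ uniquely as $\lambda\overline{\psi}+h$ with $h\in\widetilde{\mathcal M}_{\mathcal Q,X}$ and using $X_{\mathcal Q}(\lambda\overline{\psi}+h)=\lambda\|\psi\|^2_{L^2(\partial\boldsymbol\theta(\Omega))}$, which is your identity $\|\Lambda\|=1/\dist(e,\ker\Lambda)$ written out by hand. Your treatment of $\psi=0$ and of the normalization of $\mu_{\boldsymbol\theta}$ (needed for $X_{\mathcal Q}(e)=1$) covers points the paper passes over. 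Note also that neither inequality in your lemma needs $\Lambda$ to be bounded a priori, which is fortunate since $\psi\in\mathcal Q$ need not be essentially bounded.
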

\begin{proof}
    By the definition of the distance to a subspace, 
    $$ \dist_{L^1(\partial\boldsymbol\theta(\Omega))}\left( \frac{\overline{\psi}}{\|\psi\|^2_{L^2(\partial\boldsymbol\theta(\Omega))}}, \widetilde{\mathcal{M}}_{\mathcal{Q}, X}\right)=\inf \left\{\left\|\frac{\overline{\psi}}{\|\psi\|^2_{L^2(\partial\boldsymbol\theta(\Omega))}}+h\right\|_{L^1(\partial\boldsymbol\theta(\Omega))}: h\in \widetilde{\mathcal{M}}_{\mathcal{Q}, X} \right\}.$$ 
    Consequently, the distance condition in \eqref{eq 15} is equivalent to
   $$\|\lambda\overline{\psi}+h\|_{L^1(\partial\boldsymbol\theta(\Omega))} \geq |\lambda|\|\psi\|^2_{L^2(\partial\boldsymbol\theta(\Omega))}$$ for every $h \in \widetilde{\mathcal{M}}_{\mathcal{Q}, X}$ and every nonzero $\lambda \in \mathbb C.$
   
   Because $\mathcal{M}_{\mathcal{Q}} = \mathbb{C}\overline{\psi} \dotplus \widetilde{\mathcal{M}}_{\mathcal{Q}, X}$, every $f\in \mathcal{M}_{\mathcal Q}$ decomposes uniquely as $f = \lambda \overline{\psi} + h$ for some $\lambda\in\mathbb C$ and $h\in \widetilde{\mathcal{M}}_{\mathcal Q,X}$. Since $\mathrm{ker} X_{\mathcal{Q}}=\widetilde{\mathcal{M}}_{\mathcal Q,X}$,
\beq \label{eq 9} X_{\mathcal{Q}}(\lambda \overline{\psi} + h) = \lambda \int_{\partial\boldsymbol\theta(\Omega)} \psi \overline{\psi} d\mu_{\boldsymbol \theta} + X_{\mathcal{Q}}(h) = \lambda \|\psi\|^2_{L^2(\partial\boldsymbol\theta(\Omega))}. \eeq

If $X$ admits a Schur-class lift, Theorem \ref{1st characterization} gives that $X_{\mathcal{Q}}$ is contractive on $\mathcal{M}_{\mathcal{Q}}$. Thus, for every $\lambda \in \mathbb{C}$ and $h \in \widetilde{\mathcal{M}}_{\mathcal{Q}, X}$,
$$\|\lambda\overline{\psi} + h\|_{L^1(\partial\boldsymbol{\theta}(\Omega))} \geq |X_{\mathcal{Q}}(\lambda\overline{\psi} + h)| \overset{\eqref{eq 9}}{=} |\lambda| \|\psi\|^2_{L^2(\partial\boldsymbol{\theta}(\Omega))}.$$


Conversely, suppose that $\|\lambda\overline{\psi}+h\|_{L^1(\partial\boldsymbol\theta(\Omega))} \geq |\lambda|\|\psi\|^2_{L^2(\partial\boldsymbol\theta(\Omega))}$ for every nonzero $\lambda \in \mathbb C$ and $h\in \widetilde{\mathcal{M}}_{\mathcal{Q}, X}.$
Let $f = \lambda \overline{\psi} + h$ be an arbitrary element of $\mathcal{M}_{\mathcal{Q}}$. If $\lambda=0$, then $X_{\mathcal{Q}}(f) = 0 \leq \|f\|_{L^1(\partial\boldsymbol\theta(\Omega))}$. If $\lambda \neq 0$, then
$$ \|f\|_{L^1(\partial\boldsymbol\theta(\Omega))} = \|\lambda \overline{\psi} + h\|_{L^1(\partial\boldsymbol\theta(\Omega))} \geq |\lambda| \|\psi\|^2_{L^2(\partial\boldsymbol\theta(\Omega))}\overset{\eqref{eq 9}}{=} \left|X_{\mathcal{Q}}(\lambda \overline{\psi}+h)\right|.$$
Thus $X_{\mathcal Q}$ is contractive on $\mathcal{M}_{\mathcal Q}$,  and the conclusion follows from Theorem~\ref{1st characterization}.
\end{proof}

\subsection{Polydisc}
It was shown by Rudin \cite{Rd1983} that Schur functions on the Euclidean unit ball $\mathbb B_d$ can be approximated, in the weak-$*$ topology, by inner functions. More recently, this approximation phenomenon was used to obtain a characterization of commutant lifting on $\mathbb B_d$ in terms of inner functions (see \cite{BDS2025}). Motivated by these developments, we establish an analogous characterization of Schur-class liftability for quotient domains arising from the polydisc. We begin by fixing the relevant class of quotient domains and by formulating the corresponding notion of inner function.

Throughout this subsection, we take $\Omega=\mathbb D^d$, and let $G=G(m,n,d)$ be one of the imprimitive finite complex reflection groups in the Shephard-Todd classification \cite{ST1954}, where $m$ and $n$ are positive integers such that $n$ divides $m$. Let $\alpha$ be a primitive $m$-th  root of unity. The group $G(m,n,d)$ consists of transformations of the form 
$$\mathbb C^d \ni \z=(z_1,\ldots,z_d)\longmapsto (\alpha^{k_1} z_{\tau(1)}, \ldots, \alpha^{k_d} z_{\tau(d)}),$$ where $\tau\in \Sigma_d$ and $k_1,\ldots, k_d$ are integers such that $k_1+\cdots+k_d$ is divisible by $n.$

The associated basic polynomial map $\boldsymbol\theta=(\theta_1,\ldots,\theta_d):\mathbb D^d\to \boldsymbol \theta(\mathbb D^d)$ can be given explicitly: for each $1\leq j\leq d-1$, $\theta_j$ is the $j$-th elementary symmetric polynomial in $z_1^m,\ldots,z_d^m$, and $\theta_d(\boldsymbol z)=(z_1\cdots z_d)^{m/n}.$ 
The map $\boldsymbol\theta:\mathbb D^d\longrightarrow \boldsymbol\theta(\mathbb D^d)$ is proper and is factored by $G(m,n,d)$. In the special case $m=n=1$, the quotient domain $\boldsymbol\theta(\mathbb D^d)$ is the symmetrized polydisc. For notational simplicity, we shall write \(G\) for \(G(m,n,d)\) throughout the rest of this subsection.

Recall that the Shilov boundary of the quotient domain $\boldsymbol\theta(\mathbb D^d)$ is $\partial\boldsymbol \theta(\mathbb D^d)=\boldsymbol \theta(\mathbb T^d).$ In analogy with the polydisc, we say that a function $f\in H^\infty(\boldsymbol \theta(\mathbb D^d))$ is inner if the corresponding boundary function $\tilde{f}^*\in H^\infty(\boldsymbol  \theta(\mathbb T^d))$ is unimodular $\mu_{\boldsymbol \theta}$-a.e. on $\boldsymbol \theta(\mathbb T^d).$ 
Let $$\mathcal{I}(\boldsymbol \theta(\mathbb D^d))=\{f\in H^\infty(\boldsymbol \theta(\mathbb D^d)): f\circ\boldsymbol \theta \text{ is an inner function on } \mathbb D^d\}.$$ 
By Proposition \ref{required result 2}, for every $f\in H^\infty(\boldsymbol \theta(\mathbb D^d))$, $\tilde{f}^* \circ \boldsymbol \theta=(f \circ \boldsymbol \theta)^*$ almost everywhere on $\mathbb T^d$ and therefore the class of inner functions on $\boldsymbol \theta(\mathbb D^d)$ coincides with $\mathcal{I}(\boldsymbol \theta(\mathbb D^d)).$

We first prove that every Schur function on $\boldsymbol \theta(\mathbb D^d)$ is a weak-$*$ limit, in $L^\infty(\boldsymbol \theta(\mathbb T^d)),$ of inner functions on $\boldsymbol \theta(\mathbb D^d).$ 
To achieve this, we establish a preliminary result analogous to \cite[Theorem 5.5.1]{Rd1969}. The proof follows essentially the same line of argument as in Rudin’s theorem, with suitable modifications to incorporate the $G$-invariance condition. Recall that a function $f$ is $G$-invariant if $\sigma(f)=f$ for all $\sigma \in G.$
\begin{proposition}\label{caratheory}
    Let $f \in \mathcal{S}(\mathbb D^d)$ be $G$-invariant. Then there exists a sequence of $G$-invariant inner functions in the polydisc algebra $\mathcal{A}(\mathbb D^d)$ converging to $f$ uniformly on compact subsets of $\mathbb D^d.$ 
\end{proposition}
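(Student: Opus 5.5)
Following Rudin's proof of \cite[Theorem 5.5.1]{Rd1969}, I would first reduce to $f$ a $G$-invariant polynomial with $\|f\|_{\infty,\mathbb D^d}<1$, and then, for such $f$, construct $G$-invariant rational inner functions in $\mathcal{A}(\mathbb D^d)$ approximating $f$.

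\emph{Reduction.} For $0<r<1$, the dilate $f_r(\z)=f(r\z)$ is again $G$-invariant, since every element of $G(m,n,d)$ is a monomial matrix and hence commutes with scalar dilations. It is holomorphic near $\overline{\mathbb D^d}$, and $f_r\to f$ locally uniformly. The Taylor partial sums of $f_r$ converge uniformly on $\overline{\mathbb D^d}$. Each homogeneous component of a $G$-invariant function is $G$-invariant, because $G$ acts linearly. So a Ces\`aro mean of partial sums of $f_r$, or the Fej\'er-type average, gives $G$-invariant polynomials $p$ with $\|p\|_\infty<1$ close to $f$ on compacts. Equivalently, one can average any approximant by $\frac{1}{|G|}\sum_{\sigma\in G}\sigma\cdot p$; this preserves polynomiality, does not increase the sup norm on $\mathbb D^d$ (which is $G$-invariant), and fixes $f$. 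A diagonal argument then reduces the problem to approximating such a $p$.

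\emph{Construction for a polynomial.} In Rudin's argument, for a polynomial $p$ with $\|p\|_\infty<1$ one picks a large integer $N$ and considers $u=1-|p|^2>0$ on $\mathbb T^d$. One writes $u=|q|^2$ approximately, with $q$ a polynomial nonvanishing on $\overline{\mathbb D^d}$ (via $\log u$ and exponentiation of a trigonometric polynomial approximating $\tfrac12\log u$). One then forms the inner function
\[
\frac{p\,\z^{\boldsymbol N}+\tilde q}{1+\overline{\ldots}}\quad\text{or, in Rudin's form,}\quad \Phi=\frac{\tilde Q}{Q},
\]
where $Q$ is a polynomial with no zeros on $\overline{\mathbb D^d}$ and $\tilde Q(\z)=\z^{\boldsymbol N}\overline{Q(1/\bar\z)}$ is its reflection, arranged so that $\Phi\approx p$ on compacts. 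To keep $G$-invariance, I would make each ingredient $G$-invariant.
\begin{itemize}
\item The function $u=1-|p|^2$ on $\mathbb T^d$ is $G$-invariant, since $G$ preserves $\mathbb T^d$ and $p$ is invariant. Hence $\log u$ is $G$-invariant, and its trigonometric polynomial approximants (Fej\'er means) can be symmetrized over $G$, because $G$ maps characters $\z^\alpha\bar\z^\beta$ to scalar multiples of characters.
\item The analytic part $h$ of the symmetrized approximant is then $G$-invariant, so $e^{h}$ and its polynomial truncations, again symmetrized, are $G$-invariant and nonvanishing.
\item The multiplier $\z^{\boldsymbol N}$ should be taken as $(z_1\cdots z_d)^{N}$ with $N$ a multiple of $m$. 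Then $\sigma\cdot(z_1\cdots z_d)^{N}=\alpha^{N\sum k_j}(z_1\cdots z_d)^N=(z_1\cdots z_d)^N$, and the reflection $Q\mapsto\tilde Q$ commutes with the $G$-action up to this invariant factor.
\end{itemize}

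\textbf{Main obstacle.} The hardest step is checking that the reflection operation and the final rational inner function remain $G$-invariant. In particular I must verify that $\overline{Q(1/\bar\z)}$ transforms correctly under $\alpha^{k}$-rotations and permutations, which it does since $\overline{\alpha^{k}}^{-1}=\alpha^{k}$ on $\mathbb T$. I also need Rudin's estimates, which show $\Phi\to p$ locally uniformly as $N\to\infty$, to survive the symmetrizations; since averaging over $G$ is a sup-norm contraction on $\overline{\mathbb D^d}$, this should be routine. Finally, $\Phi$ is in $\mathcal{A}(\mathbb D^d)$ because $Q$ has no zeros on $\overline{\mathbb D^d}$, and it is unimodular on $\mathbb T^d$, hence inner.
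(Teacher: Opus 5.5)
Your reduction to a $G$-invariant polynomial $P$ with $\max_{\overline{\mathbb D^d}}|P|<1$ is fine and matches the paper: approximate on the $G$-saturated compact $\bigcup_{\sigma\in G}\sigma(K)$, then average over $G$. The gap is in the construction step. You propose to factor $1-|p|^2\approx|q|^2$ with $q$ a polynomial zero-free on $\overline{\mathbb D^d}$, by exponentiating the analytic part of a trigonometric approximant of $\tfrac12\log u$. This is a one-variable device and fails for $d\ge 2$.

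Here is why. If $q$ has no zeros on $\overline{\mathbb D^d}$, then $\log|q|^2=2\,\mathrm{Re}\log q$ has Fourier spectrum in $\mathbb Z_+^d\cup(-\mathbb Z_+^d)$. Since $u$ is bounded below, uniform convergence $|q_j|^2\to u$ forces $\log|q_j|^2\to\log u$ uniformly, so Fourier coefficients pass to the limit. Hence $u$ can be uniformly approximated by such $|q|^2$ only if $\log u$ has no mixed frequencies. Already $p=(z_1+z_2)/3$ gives $u=\frac19\bigl(7-2\cos(\theta_1-\theta_2)\bigr)$, whose logarithm has a nonzero Fourier coefficient at $(1,-1)$. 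Equivalently, a real trigonometric polynomial on $\mathbb T^d$ is the real part of an analytic one only when it has no mixed frequencies. In addition, you never specify $Q$, so the claim that Rudin's estimates give $\Phi\to p$ has nothing behind it. The $G$-invariance bookkeeping you flag as the main obstacle is in fact the easy part.

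No factorization is needed. Rudin's proof of \cite[Theorem 5.5.1]{Rd1969}, which the paper follows, takes $M(\boldsymbol z)=e^{i\nu}(z_1\cdots z_d)^l$ and
\[
g(\boldsymbol z)=\frac{P(\boldsymbol z)+M(\boldsymbol z)}{1+M(\boldsymbol z)\tilde P(1/\boldsymbol z)},\qquad \tilde P(\boldsymbol z)=\overline{P(\bar{\boldsymbol z})}.
\]
In your notation this is $e^{i\nu}\tilde Q/Q$ with $Q=1+M\tilde P(1/\boldsymbol z)$. The needed properties are direct:
\begin{itemize}
\item If $l$ is at least the degree of $P$ in each variable, $M\tilde P(1/\boldsymbol z)$ is a polynomial of modulus $|P|<1$ on $\mathbb T^d$, hence of modulus $<1$ on $\overline{\mathbb D^d}$. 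So $g\in\mathcal A(\mathbb D^d)$.
\item A direct computation gives $|g|=1$ on $\mathbb T^d$.
\item $M$ and $M\tilde P(1/\boldsymbol z)$ tend to $0$ locally uniformly as $l\to\infty$, so $g\to P$ locally uniformly.
\end{itemize}
Your invariance observations then finish the proof. Choosing $m\mid l$ makes $M$ invariant. Since $G$ consists of unitary monomial matrices, $1/\overline{\sigma(\boldsymbol z)}=\sigma(1/\bar{\boldsymbol z})$ componentwise, so $\boldsymbol z\mapsto\tilde P(1/\boldsymbol z)$ is $G$-invariant. Hence $g$ is $G$-invariant.
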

\begin{proof}
Fix a compact subset $K\subseteq \mathbb D^d$ and let $0<\epsilon<\frac{1}{2}.$ To enforce $G$-invariance, we define the orbit $\tilde{K}=\cup_{\sigma\in G} \sigma(K)$, which remains compact.
Choose a polynomial $P'$ such that $|P'(\z)| <1$ on $\overline{\mathbb D}^d$ and $|f(\boldsymbol{z}) - P'(\boldsymbol{z})| < \epsilon$ for all $\boldsymbol{z} \in \tilde{K}$.
To restore $G$-invariance, we consider the polynomial
$$ P(\boldsymbol{z}) = \frac{1}{|G|} \sum_{\sigma \in G} P'(\sigma(\z)). $$ Then $P$ is $G$-invariant and satisfies $|P(\z)| <1$  on $\overline{\mathbb D}^d.$ Moreover, for any $\z \in \tilde{K}$, we have
\beq \label{eq 17} |f(\boldsymbol{z}) - P(\boldsymbol{z})| \leq \frac{1}{|G|} \sum_{\sigma \in G} |f(\boldsymbol{z}) - P'(\sigma(\boldsymbol{z}))| = \frac{1}{|G|} \sum_{\sigma \in G} |f(\sigma(\boldsymbol{z}) - P'(\sigma(\boldsymbol{z})| < \epsilon, \eeq 
where we used the fact that $f$ is $G$-invariant and $\sigma(\z)\in \tilde{K}.$ For any polynomial $p$ in $d$ complex variables, let $\tilde{p}$ denote the polynomial defined by $\tilde{p}(\z)=\overline{p(\bar{\z})}.$  
Then, arguing as in \cite[Theorem~5.5.1]{Rd1969}, we obtain a monomial $M(\z)$ of the form $e^{i\nu} (z_1z_2\cdots z_d)^l$ of sufficiently high degree such that $m$ divides $l$ and satisfies $(i)$ $|M(\z)|=1$ on $\mathbb T^d$, $(ii)$ $M(\z) \tilde{P}(\frac{1}{\z})$ is a polynomial, and $(iii)$ both $|P(\z)M(\z)\tilde{P}(\frac{1}{\z})| <\epsilon$ and $|M(\z)|<\epsilon$ on $\tilde{K}.$ 
Define the rational function $$g(\z)=\frac{P(\z)+M(\z)}{1+M(\z)\tilde{P}(\frac{1}{\z})}.$$ Since $\alpha$ is $m$-th primitive root of unity and $l$ is divisible by $m$, then $\alpha^{l(k_1+\cdots+k_d)}=1$. Consequently, $M(\z)$ is $G$-invariant, that is, $M(\sigma(\z))=M(\z)$ for all $\sigma\in G.$ 
It is now straightforward to verify that $g$ is a $G$-invariant rational inner function in $\mathcal{A}(\mathbb D^d)$. Moreover, using \eqref{eq 17}, we obtain $|f(\z)-g(\z)| <5\epsilon$ for all $\z \in K.$ Since $K$ and $\epsilon$ were arbitrary, the result follows.
\end{proof}
 We now show that inner functions are weak-$*$ dense in the Schur class on $\boldsymbol \theta(\mathbb D^d).$

\begin{proposition}\label{weak*}
    Let  $\varphi\in \mathcal{S}(\boldsymbol \theta(\mathbb D^d)).$ Then there exists a sequence  $\{f_n\}\subseteq \mathcal{I}(\boldsymbol \theta(\mathbb D^d))$ converging to $\varphi$ in the weak-$*$ topology of $L^\infty(\boldsymbol \theta(\mathbb T^d)).$ 
\end{proposition}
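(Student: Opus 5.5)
Let $\varphi\in\mathcal S(\boldsymbol\theta(\mathbb D^d))$ and set $f=\varphi\circ\boldsymbol\theta$. The plan is to transfer the problem to $\mathbb D^d$, apply Proposition~\ref{caratheory}, and push the resulting inner functions back down.

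First, $f$ is holomorphic on $\mathbb D^d$, $G$-invariant, and satisfies $|f|\le 1$, so $f\in\mathcal S(\mathbb D^d)$. Proposition~\ref{caratheory} then gives $G$-invariant inner functions $g_n\in\mathcal A(\mathbb D^d)$ with $g_n\to f$ uniformly on compact subsets of $\mathbb D^d$. Each $g_n$ is $G$-invariant and holomorphic, so by the standard factorization of $G$-invariant holomorphic functions through the basic polynomial map (the same fact underlying $\Gamma_h$, applied here to invariant rather than alternating functions) there is $f_n\in H^\infty(\boldsymbol\theta(\mathbb D^d))$ with $f_n\circ\boldsymbol\theta=g_n$ and $\|f_n\|_\infty\le1$. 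Since $g_n$ is inner, $f_n\in\mathcal I(\boldsymbol\theta(\mathbb D^d))$. Concretely one can take $f_n=g_n\circ s$, where $s$ is a local inverse of $\boldsymbol\theta$; this is well defined because $g_n$ is constant on fibres, holomorphic off $\boldsymbol\theta(N(J_{\boldsymbol\theta}))$, and bounded, so it extends by Riemann's removable singularity theorem.

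Next I check weak-$*$ convergence in $L^\infty(\boldsymbol\theta(\mathbb T^d))$. The $\tilde f_n^*$ are bounded by $1$ in $L^\infty$, which is the dual of $L^1(\boldsymbol\theta(\mathbb T^d),\mu_{\boldsymbol\theta})$. It therefore suffices to test convergence on a dense subset of $L^1$, for instance $L^2(\partial\boldsymbol\theta(\mathbb D^d))$, or even just the conjugates of $H^2$ together with $H^2_0$ and $\mathcal M_{\boldsymbol\theta(\mathbb D^d)}$.

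To do this, pull back via the unitary $\Gamma_l$: for $h\in L^2(\partial\boldsymbol\theta)$,
\[
\int \tilde f_n^* h\,d\mu_{\boldsymbol\theta}=\frac{1}{c_{\boldsymbol\theta}^2}\bigl\langle g_n^* J_{\boldsymbol\theta}, \overline{J_{\boldsymbol\theta}(h\circ\boldsymbol\theta)}\bigr\rangle_{L^2(\mathbb T^d)} .
\]
So it is enough that $g_n^*\to f^*$ weakly in $L^2(\mathbb T^d)$. Equivalently, $g_n^*\to f^*$ weak-$*$ in $L^\infty(\mathbb T^d)$, since $J_{\boldsymbol\theta}\,\overline{J_{\boldsymbol\theta}(h\circ\boldsymbol\theta)}\in L^1(\mathbb T^d)$.

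This last step is the main obstacle, but it is standard on the polydisc. The sequence $\{g_n^*\}$ is bounded in $H^2(\mathbb T^d)$, and its Fourier coefficients are Taylor coefficients, which converge by locally uniform convergence via Cauchy estimates on a smaller torus. Hence $g_n^*\to f^*$ weakly in $H^2(\mathbb T^d)$, because trigonometric polynomials are dense and the sequence is bounded. By uniform boundedness in $L^\infty$ and density of $L^2$ in $L^1$, this upgrades to weak-$*$ convergence in $L^\infty(\mathbb T^d)$.

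Finally, using $\tilde f_n^*\circ\boldsymbol\theta=g_n^*$ and $\tilde\varphi^*\circ\boldsymbol\theta=f^*$ a.e. (Proposition~\ref{required result 2} and the remark preceding the statement), we get $\int(\tilde f_n^*-\tilde\varphi^*)h\,d\mu_{\boldsymbol\theta}\to0$ for every $h\in L^1(\boldsymbol\theta(\mathbb T^d))$. This is the required weak-$*$ convergence.
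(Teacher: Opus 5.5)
Your proposal is correct and follows essentially the same route as the paper. You pull back to $\varphi\circ\boldsymbol\theta$, approximate by $G$-invariant inner functions via Proposition~\ref{caratheory}, and push these down to $\mathcal{I}(\boldsymbol\theta(\mathbb D^d))$; the paper cites the analytic Chevalley--Shephard--Todd theorem for this step, where you sketch the local-inverse and Riemann-extension argument. You then get weak-$*$ convergence on $\mathbb T^d$ from convergence of the Fourier (Taylor) coefficients together with the uniform bound.

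One cosmetic slip: the pulled-back pairing should be $\int_{\mathbb T^d} g_n^*\,|J_{\boldsymbol\theta}|^2\,(h\circ\boldsymbol\theta)\,d\mu$, that is, against $J_{\boldsymbol\theta}\overline{(h\circ\boldsymbol\theta)}$ rather than $\overline{J_{\boldsymbol\theta}(h\circ\boldsymbol\theta)}$. This does not affect the argument, since $|J_{\boldsymbol\theta}|^2(h\circ\boldsymbol\theta)\in L^1(\mathbb T^d)$ for every $h\in L^1(\mu_{\boldsymbol\theta})$.
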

\begin{proof}
    Let $\Phi=\varphi \circ \boldsymbol\theta.$ Then $\Phi$ is a $G$-invariant Schur function on $\mathbb D^d.$ By Proposition \ref{caratheory}, there exists a sequence $\{h_n\}$ of $G$-invariant inner functions in $\mathcal{A}(\mathbb D^d)$ such that $h_n \to \Phi$ uniformly on compact subsets of $\mathbb D^d.$ 
    By the analytic Chevalley-Shephard-Todd theorem (\cite[Theorem 3.1]{BDGR2022}), each $h_n$ factors through quotient map, that is, there exists $f_n\in H^\infty(\boldsymbol \theta(\mathbb D^d))$ such that $h_n =f_n \circ \boldsymbol \theta.$ 
    Since $h_n$ is inner, $f_n \in \mathcal{I}(\boldsymbol \theta(\mathbb D^d)).$
   
    We claim that $f_n\to \varphi$ in the weak-$*$ topology of $L^\infty(\boldsymbol \theta(\mathbb T^d)).$
    By the change of variables formula, it suffices to show that
    $$ \int_{\mathbb T^d} h_n u d\mu \to \int_{\mathbb T^d} \Phi u d\mu \qquad \text{for all}\; u \in L^1(\mathbb T^d).$$
    Since trigonometric polynomials are dense in $L^1(\mathbb T^d)$ and the functions $h_n$ are uniformly bounded, it is enough to verify that

    \beq \label{eq 13}  \int_{\mathbb{T}^d} h_n(\boldsymbol{z}) \boldsymbol{z}^\alpha \bar{\boldsymbol{z}}^\beta \, d\mu(\boldsymbol{z}) \to \int_{\mathbb{T}^d} \Phi(\boldsymbol{z}) \boldsymbol{z}^\alpha \bar{\boldsymbol{z}}^\beta \, d\mu(\boldsymbol{z}) \qquad \text{for all}\; \alpha , \beta \in \mathbb Z_+^d.\eeq
    The integrals in \eqref{eq 13} are partial derivatives of $h_n$ and $\Phi$ at the origin. The convergence of these integrals follows directly from the Cauchy integral formula and the uniform convergence of $h_n \to \Phi$ on compact subsets of $\mathbb{D}^d$.
    \end{proof} 
    
    The weak-$*$ density of inner functions established above yields the following approximation characterization for commutant lifting. 
    For a subset $S\subseteq L^1(\boldsymbol \theta(\mathbb T^d))$ and a function $\psi \in H^2(\boldsymbol \theta(\mathbb D^d))$, we say a sequence $\{f_n\}\subseteq \mathcal{S}(\boldsymbol \theta(\mathbb D^d))$ converges to $\psi$ in the weak-$*$ topology determined by $S$ if
$$\lim_{n\to\infty}\int_{\boldsymbol \theta(\mathbb T^d)} f_n g\, d\mu_{\boldsymbol\theta}=\int_{\boldsymbol \theta(\mathbb T^d)} \psi g\, d\mu_{\boldsymbol\theta}\qquad \text{for all } g\in S.$$    

\begin{theorem}\label{3rd characterization}
    Let  $\mathcal{Q}\subseteq H^2(\boldsymbol \theta(\mathbb T^d))$ be a quotient module and let $X\in \mathcal{B}_1(\mathcal{Q})$ be a contractive module map. Then, $X$ admits a lift if and only if there exists a sequence $\{f_n\}\subseteq \mathcal{I}(\boldsymbol \theta(\mathbb D^d))$ converging to $\psi=XP_{\mathcal{Q}}1$ in the weak-$*$ topology determined by $\mathcal{M}_{\mathcal{Q}}.$
\end{theorem}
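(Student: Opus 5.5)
For the forward direction, I assume $X$ admits a lift $\varphi\in\mathcal S(\boldsymbol\theta(\mathbb D^d))$, so $X=S_\varphi$. The proof of Theorem \ref{1st characterization} then shows that $X_{\mathcal Q}=\chi_\varphi|_{\mathcal M_{\mathcal Q}}$, that is,
\[
\int \psi g\,d\mu_{\boldsymbol\theta}=\int\varphi g\,d\mu_{\boldsymbol\theta}\qquad\text{for all } g\in\mathcal M_{\mathcal Q}.
\]
Next I apply Proposition \ref{weak*} to get inner functions $f_n\in\mathcal I(\boldsymbol\theta(\mathbb D^d))$ with $f_n\to\varphi$ weak-$*$ in $L^\infty(\boldsymbol\theta(\mathbb T^d))$. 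The elements of $\mathcal M_{\mathcal Q}$ lie in $L^2\subseteq L^1$, since $\mu_{\boldsymbol\theta}$ is finite. Hence
\[
\int f_n g\,d\mu_{\boldsymbol\theta}\to\int\varphi g\,d\mu_{\boldsymbol\theta}=\int\psi g\,d\mu_{\boldsymbol\theta}\qquad\text{for all } g\in\mathcal M_{\mathcal Q},
\]
which is the required convergence.

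For the converse, suppose $f_n\in\mathcal I(\boldsymbol\theta(\mathbb D^d))$ converge to $\psi$ in the weak-$*$ topology determined by $\mathcal M_{\mathcal Q}$. I will verify the criterion of Theorem \ref{1st characterization}, namely that $X_{\mathcal Q}$ is contractive. Fix $g\in\mathcal M_{\mathcal Q}$. The boundary function of each $f_n$ is unimodular a.e. (equivalently, $f_n\circ\boldsymbol\theta$ is inner on $\mathbb D^d$ and the boundary values match by Proposition \ref{required result 2}). Hence $\|f_n\|_{L^\infty(\boldsymbol\theta(\mathbb T^d))}=1$, and
\[
|X_{\mathcal Q}(g)|=\Big|\int\psi g\,d\mu_{\boldsymbol\theta}\Big|=\lim_n\Big|\int f_n g\,d\mu_{\boldsymbol\theta}\Big|\le\|g\|_{L^1}.
\]
Theorem \ref{1st characterization} then gives a Schur-class lift.

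One normalization needs checking: $X_{\mathcal Q}$ in \eqref{eq 11} and the weak-$*$ convergence must use the same pairing $\int\cdot\,\cdot\,d\mu_{\boldsymbol\theta}$. The $L^1$ norm must also be taken with respect to $\mu_{\boldsymbol\theta}$, so that $|\int fg\,d\mu_{\boldsymbol\theta}|\le\|f\|_\infty\|g\|_1$ holds; the $c_{\boldsymbol\theta}$ factors in \eqref{eq 5} are consistent for this. The main content is Proposition \ref{weak*}, which is already available, so the only delicate point is that convergence against $\mathcal M_{\mathcal Q}$ alone suffices. This works because the contractivity test of Theorem \ref{1st characterization} only involves $g\in\mathcal M_{\mathcal Q}$.
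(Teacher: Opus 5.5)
Your proof is correct and follows the paper's argument essentially step for step. For the forward direction, you combine the identity $X_{\mathcal Q}=\chi_\varphi|_{\mathcal M_{\mathcal Q}}$ from the proof of Theorem~\ref{1st characterization} with the weak-$*$ density of inner functions from Proposition~\ref{weak*}. For the converse, you bound $\bigl|\int f_n g\,d\mu_{\boldsymbol\theta}\bigr|\le\|g\|_{L^1}$ via unimodularity and then invoke Theorem~\ref{1st characterization}. Your caveat that the pairing and the $L^1$ norm must use the same measure is a fair observation about the paper's $c_{\boldsymbol\theta}$ conventions, which the paper itself assumes tacitly.
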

\begin{proof}
    Assume first that $X\in \mathcal{B}_1(\mathcal{Q})$ admits a Schur-class lift. By Theorem \ref{1st characterization}, there exists $\varphi \in \mathcal{S}(\boldsymbol \theta(\mathbb D^d))$ such that the associated functional $X_{\mathcal{Q}}$ on $\mathcal{M}_{\mathcal{Q}}$ is given by integration against $\varphi.$ Equivalently, for every $g\in \mathcal{M}_{\mathcal{Q}}$, $$\int_{ \boldsymbol \theta(\mathbb T^d)} \psi g d\mu_{\boldsymbol \theta}=X_{\mathcal{Q}} g=\int_{\boldsymbol \theta(\mathbb T^d)} \varphi g d\mu_{\boldsymbol \theta}.$$   
    By Proposition \ref{weak*}, there exists a sequence $\{f_n\}\subseteq \mathcal{I}(\boldsymbol \theta(\mathbb D^d))$ converging to $ \varphi$ in the weak-$*$ topology of $L^\infty(\boldsymbol \theta(\mathbb T^d)).$ In particular, for every $g\in \mathcal{M}_{\mathcal{Q}},$
    $$\lim_{n\to\infty}\int_{\boldsymbol \theta(\mathbb T^d)} f_n g\, d\mu_{\boldsymbol \theta}
    =\int_{\boldsymbol \theta(\mathbb T^d)} \varphi g\, d\mu_{\boldsymbol \theta}
    =\int_{\boldsymbol \theta(\mathbb T^d)} \psi g\, d\mu_{\boldsymbol \theta}.$$
    Thus, $f_n \to \psi$ in the weak-$*$ topology determined by $\mathcal{M}_{\mathcal{Q}}.$
    
    Conversely, suppose there exists a sequence $\{f_n\}\subseteq \mathcal{I}(\boldsymbol \theta(\mathbb D^d))$ such that
    $$\lim_{n\to\infty}\int_{\boldsymbol \theta(\mathbb T^d)} f_n g\, d\mu_{\boldsymbol \theta}
    =\int_{\boldsymbol \theta(\mathbb T^d)} \psi g\, d\mu_{\boldsymbol \theta}
    \qquad \text{for all } g\in \mathcal{M}_{\mathcal{Q}}.$$
    Since each $f_n$ is inner, $|f_n|=1$ $\mu_{\boldsymbol \theta}$-a.e. on $\boldsymbol \theta(\mathbb T^d)$.
    Thus, for every $g\in \mathcal{M}_{\mathcal{Q}},$
   \beqn 
        |X_{\mathcal{Q}}g|
        = \left|\int_{\boldsymbol \theta(\mathbb T^d)} \psi g\, d\mu_{\boldsymbol \theta}\right|= \left|\lim_{n\to\infty}\int_{\boldsymbol \theta(\mathbb T^d)} f_n g \, d\mu_{\boldsymbol \theta}\right|
        \leq\|g\|_{L^1(\boldsymbol \theta(\mathbb T^d))}. 
    \eeqn
    Hence $X_{\mathcal Q}$ is contractive on $\mathcal M_{\mathcal Q}.$ The conclusion now follows from Theorem \ref{1st characterization}.
    \end{proof}
    
The weak-$*$ convergence condition in Theorem~\ref{3rd characterization} can be further simplified. 
Recall from the proof of Theorem \ref{1st characterization} that the associated functional $X_{\mathcal{Q}}$ vanishes on $\mathcal{M}_{\boldsymbol \theta(\mathbb D^d)}\dotplus H^2_0(\boldsymbol \theta(\mathbb T^d)).$  
Consequently, convergence against $\mathcal{M}_{\mathcal{Q}}$  reduces exclusively to convergence determined by $\mathcal{Q}^{conj}.$ 
This observation leads to the following immediate corollary. 
\begin{corollary}\label{refined characterization}
    Let $\mathcal{Q} \subseteq H^2(\boldsymbol \theta(\mathbb T^d))$ be a quotient module and let $X\in \mathcal{B}_1(\mathcal{Q})$ be a contractive module map. Then, $X$ admits a lift if and only if there exists a sequence $\{f_n\}\subseteq \mathcal{I}(\boldsymbol \theta(\mathbb D^d))$ converging to $\psi=XP_{\mathcal{Q}}1$ in the weak-$*$ topology determined by $\mathcal{Q}^{conj}.$
\end{corollary}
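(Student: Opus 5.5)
The plan is to deduce Corollary \ref{refined characterization} from Theorem \ref{3rd characterization}. The key input is that every inner function integrates to zero against the two summands $\mathcal M_{\boldsymbol\theta(\mathbb D^d)}$ and $H^2_0(\boldsymbol\theta(\mathbb T^d))$ of $\mathcal M_{\mathcal Q}$ in \eqref{eq 14}, and so does $\psi$.

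One direction is immediate. If $X$ admits a lift, Theorem \ref{3rd characterization} supplies $\{f_n\}\subseteq\mathcal I(\boldsymbol\theta(\mathbb D^d))$ converging to $\psi$ in the weak-$*$ topology determined by $\mathcal M_{\mathcal Q}$. Since $\mathcal Q^{conj}\subseteq\mathcal M_{\mathcal Q}$, the same sequence converges to $\psi$ in the weak-$*$ topology determined by $\mathcal Q^{conj}$.

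For the converse, suppose $\int f_n\bar q\,d\mu_{\boldsymbol\theta}\to\int\psi\bar q\,d\mu_{\boldsymbol\theta}$ for every $q\in\mathcal Q$. By linearity of the integrals in $g$ and the direct sum decomposition \eqref{eq 14}, it suffices to show convergence for $g\in\mathcal M_{\boldsymbol\theta(\mathbb D^d)}$ and for $g\in H^2_0(\boldsymbol\theta(\mathbb T^d))$. In both cases I claim that $\int hg\,d\mu_{\boldsymbol\theta}=0$ for every $h\in H^\infty(\boldsymbol\theta(\mathbb T^d))$, and in particular for $h=f_n$ and $h=\psi$.

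\begin{itemize}
\item Take $g\in\mathcal M_{\boldsymbol\theta(\mathbb D^d)}$. Then $\bar g\in\mathcal M_{\boldsymbol\theta(\mathbb D^d)}$, which is orthogonal to $H^2$. Hence $\int hg\,d\mu_{\boldsymbol\theta}=\langle h,\bar g\rangle=0$ for any $h\in H^2(\boldsymbol\theta(\mathbb T^d))$. This covers $\psi$, and it covers each $f_n$ because $f_n\in H^\infty\subseteq H^2$ by Proposition \ref{required result 2}.
\item Take $g\in H^2_0(\boldsymbol\theta(\mathbb T^d))$. Lemma \ref{orthogonality} gives $\int hg\,d\mu_{\boldsymbol\theta}=\overline{\langle\bar h,g\rangle}=0$ for all $h\in H^2$.
\end{itemize}

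Thus both sides of the convergence statement vanish identically on $\mathcal M_{\boldsymbol\theta(\mathbb D^d)}\dotplus H^2_0(\boldsymbol\theta(\mathbb T^d))$. Convergence against $\mathcal Q^{conj}$ therefore upgrades to convergence against all of $\mathcal M_{\mathcal Q}$, and Theorem \ref{3rd characterization} yields the lift.

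One point needs care. Elements of $\mathcal M_{\mathcal Q}$ are sums taken in $L^1$, but each summand lies in $L^2$. The pairing with the bounded function $f_n$, or with $\psi\in L^2$, is therefore a genuine $L^2$ inner product on each summand, so splitting the integral is legitimate. With this observed, I expect no real obstacle; the corollary is essentially a bookkeeping consequence of the orthogonality already established in the proof of Theorem \ref{1st characterization}.
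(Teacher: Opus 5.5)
Your proposal is correct and follows the paper's route. Like the paper, you reduce to Theorem \ref{3rd characterization} by observing that the pairing vanishes on $\mathcal{M}_{\boldsymbol\theta(\mathbb D^d)}\dotplus H^2_0(\boldsymbol\theta(\mathbb T^d))$, so convergence against $\mathcal{Q}^{conj}$ already gives convergence against $\mathcal{M}_{\mathcal{Q}}$. You also supply a step the paper's one-line justification leaves implicit but which is needed: the pairing with each inner $f_n$, not only with $\psi$, vanishes on those two summands, via $f_n\in H^\infty\subseteq H^2$ and Lemma \ref{orthogonality}. One small fix: state your claim for all $h\in H^2(\boldsymbol\theta(\mathbb T^d))$, as your bullets actually prove, rather than for $h\in H^\infty$, since $\psi$ need not be bounded.
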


\section{Interpolation} \label{S4}
In this section we apply the commutant lifting results of Section~\ref{S3} to interpolation problems on quotient domains. 
This approach is motivated by Sarason’s operator-theoretic proof of the classical Nevanlinna–Pick interpolation theorem \cite[Section~4]{DS1967}, and by subsequent multivariable extensions for the polydisc and the Euclidean unit ball; see, for instance, \cite{DeS2026,BDS2025}.
Interpolation on the symmetrized bidisc, which is a proper image of the bidisc, was previously studied in \cite{AY2017,BS2018} using realization formulas for Schur functions. In contrast, our framework relies on commutant lifting and applies uniformly to quotient domains, including the symmetrized bidisc and the tetrablock.

The primary objective of this section is to characterize the existence of a function $\varphi\in \mathcal{S}(\boldsymbol \theta(\Omega))$ satisfying 
$$\varphi(\boldsymbol \zeta_i)=w_i, \qquad i=1,\ldots,p,$$
for a prescribed set of distinct points $\mathcal{Z}=\{\boldsymbol \zeta_1,\ldots,\boldsymbol \zeta_p\}\subseteq \boldsymbol \theta(\Omega)$ and target scalars $w_1,\ldots,w_p\in\mathbb D.$

We begin by recalling that $H^2(\boldsymbol \theta(\Omega))$ is a reproducing kernel Hilbert space with reproducing kernel $\mathbb S_{\boldsymbol \theta(\Omega)}$ is as given in \eqref{Hardy kernel}. For each $\zeta \in \boldsymbol \theta(\Omega)$, the kernel function $\mathbb S_{\boldsymbol \theta(\Omega)}(\cdot,\boldsymbol \zeta)$ satisfies
 the fundamental reproducing property
$$\inp{f}{\mathbb S_{\boldsymbol \theta(\Omega)}(\cdot,\boldsymbol \zeta)}_{H^2(\boldsymbol\theta(\Omega))}= f(\boldsymbol \zeta)\qquad \text{for all}\; f\in H^2(\boldsymbol \theta(\Omega)).$$
In particular, \beq \label{eq 10} T_{\zeta_i}^* \mathbb S_{\boldsymbol \theta(\Omega)}(\cdot, \boldsymbol \eta)=\bar{\eta}_i \mathbb S_{\boldsymbol\theta(\Omega)}(\cdot, \boldsymbol \eta), \qquad \boldsymbol \eta=(\eta_1,\ldots, \eta_d)\in \boldsymbol\theta(\Omega), \quad i=1,\ldots,d. \eeq
Given the set of $p$ distinct points $\mathcal{Z}$, we define the subspace
$$\mathcal{Q}_{\mathcal{Z}}=\mathrm{span}\{\mathbb S_{\boldsymbol\theta(\Omega)}(\cdot, \boldsymbol \zeta_i): i=1,\ldots, p\}.$$
It follows from \eqref{eq 10} that $\mathcal{Q}_{\mathcal{Z}}$ is a finite-dimensional quotient module of $H^2(\boldsymbol \theta(\Omega)).$ Its orthogonal complement is the space of functions vanishing on the interpolation nodes:
$$\mathcal{Q}_{\mathcal{Z}}^\perp =\left\{ g\in H^2(\boldsymbol \theta(\Omega)): g(\boldsymbol \zeta_i)=0 \text{ for all } i=1,\ldots,p\right\}.$$ We refer to quotient modules of this form as finite-dimensional {\it zero-based} quotient modules.
Since each kernel function $\mathbb S_{\boldsymbol \theta(\Omega)}(\cdot,\boldsymbol \zeta_j)$ is bounded, we have
 $\mathcal{Q}_{\mathcal{Z}} \subseteq H^\infty(\boldsymbol \theta(\Omega)).$

Let $X\in \mathcal{B}(\mathcal{Q}_{\mathcal{Z}})$ be a bounded module map,
and set $\psi =XP_{\mathcal{Q}_{\mathcal{Z}}} 1$. Then $ \psi \in \mathcal{Q}_{\mathcal{Z}}\subseteq H^\infty (\boldsymbol \theta(\Omega)).$
As $X$ is a module map, for any $\alpha \in \mathbb Z_+^d$ we obtain
\beqn X(P_{\mathcal{Q}_{\mathcal{Z}}} \boldsymbol \zeta^\alpha P_{\mathcal{Q}_{\mathcal{Z}}} 1) =X (S_{\boldsymbol \zeta}^\alpha  P_{\mathcal{Q}_{\mathcal{Z}}} 1)= S_{\boldsymbol \zeta}^\alpha X P_{\mathcal{Q}_{\mathcal{Z}}} 1= S_{\boldsymbol \zeta}^\alpha \psi=S_{\psi} P_{\mathcal{Q}_{\mathcal{Z}}} {\boldsymbol \zeta}^\alpha P_{\mathcal{Q}_{\mathcal{Z}}}1.\eeqn
Since $\mathcal{Q}_{\mathcal{Z}} =\overline{\mathrm{span}} \{P_{\mathcal{Q}_{\mathcal{Z}}} \boldsymbol \zeta^\alpha P_{\mathcal{Q}_{\mathcal{Z}}} 1: \alpha \in \mathbb Z_+^d \}$, it follows that $X=S_{\psi}$ on $\mathcal{Q}_{\mathcal{Z}}.$ 
Thus, every bounded module map on $\mathcal{Q}_{\mathcal{Z}}$ admits an $H^\infty(\boldsymbol \theta(\Omega))$-lift.
However, such a lift need not preserve the Schur-class norm structure on $\boldsymbol \theta(\Omega).$ 
Thus the interpolation problem reduces to deciding when the $H^\infty(\boldsymbol \theta(\Omega))$-lift can be chosen from the Schur class.

Using the commutant lifting framework developed in Section~\ref{S3}, we now establish the following interpolation theorem.
\begin{theorem}\label{interpolation 1st}
    Let $\mathcal{Z}=\{\boldsymbol \zeta_1,\ldots,\boldsymbol \zeta_p\}\subseteq \boldsymbol \theta(\Omega)$ be a set of $p$ distinct points, and let $w_1,\ldots,w_p\in \mathbb D.$
Define the scalars $c_1,\ldots,c_p$ via the Gram matrix inverse:
    $$\begin{bmatrix}
            c_1 \\  \vdots\\ c_p 
        \end{bmatrix}= {\begin{bmatrix}
            \mathbb S_{\boldsymbol \theta(\Omega)}(\boldsymbol \zeta_i, \boldsymbol \zeta_j)
        \end{bmatrix}}_{p\times p}^{-1} \begin{bmatrix}
            w_1 \\ \vdots \\w_p
        \end{bmatrix}.$$
    Set $\mathcal{M}_{\mathcal{Q}_{\mathcal{Z}}}=\mathcal{Q}_{\mathcal{Z}}^{conj} \dotplus \mathcal{M}_{\boldsymbol \theta(\Omega)}\dotplus H^2_{0}(\partial\boldsymbol \theta(\Omega))$ and $\psi_{\mathcal{Z}, \mathcal{W}}=\sum_{i=1}^p c_i \mathbb S_{\boldsymbol \theta(\Omega)}(\cdot, \boldsymbol \zeta_i). $ 
    Then there exists $\varphi\in \mathcal{S}(\boldsymbol \theta(\Omega))$ satisfying $\varphi(\boldsymbol \zeta_i)=w_i$ for all $i=1,\ldots,p$ if and only if the functional $X_{\mathcal{Q_{\mathcal{Z}}}}: \left(\mathcal{M}_{\mathcal{Q}_{\mathcal{Z}}}, \|\cdot \|_{L^1(\partial\boldsymbol \theta(\Omega))}\right) \to \mathbb C$ defined by $$X_{\mathcal{Q_{\mathcal{Z}}}} f=\int_{\partial\boldsymbol \theta(\Omega)} \psi_{\mathcal{Z}, \mathcal{W}} fd\mu_{\boldsymbol \theta}, \qquad f\in \mathcal{M}_{\mathcal{Q}_{\mathcal{Z}}}$$ is contractive.
     \end{theorem}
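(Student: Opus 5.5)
The plan is to reduce the interpolation problem to Theorem \ref{1st characterization}, using the remark that contractivity of $X$ need not be assumed. Define $X := S_{\psi_{\mathcal Z,\mathcal W}}$ on $\mathcal Q_{\mathcal Z}$. Since $\psi_{\mathcal Z,\mathcal W}\in\mathcal Q_{\mathcal Z}\subseteq H^\infty(\boldsymbol\theta(\Omega))$, this is a bounded module map. The proof then rests on two identifications. First, $XP_{\mathcal Q_{\mathcal Z}}1=\psi_{\mathcal Z,\mathcal W}$. Second, for any $\varphi\in H^\infty(\boldsymbol\theta(\Omega))$, we have $S_\varphi = X$ if and only if $\varphi(\boldsymbol\zeta_i)=w_i$ for all $i$.

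For the first identification, note that $\mathcal Q_{\mathcal Z}^\perp$ is a submodule. As in \eqref{eq 7}, $P_{\mathcal Q_{\mathcal Z}}T_\psi P_{\mathcal Q_{\mathcal Z}}1 = P_{\mathcal Q_{\mathcal Z}}\psi=\psi$, because $\psi\in\mathcal Q_{\mathcal Z}$. For the second, I use the reproducing property. Since $\mathcal Q_{\mathcal Z}$ is spanned by the kernels $k_j:=\mathbb S_{\boldsymbol\theta(\Omega)}(\cdot,\boldsymbol\zeta_j)$, which are joint eigenvectors of $T_\varphi^*$, we get $S_\varphi^* k_j = \overline{\varphi(\boldsymbol\zeta_j)}k_j$. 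Thus $S_\varphi$ is determined by the values $\varphi(\boldsymbol\zeta_j)$. It remains to check that $\psi(\boldsymbol\zeta_j)=\sum_i c_i\,\mathbb S_{\boldsymbol\theta(\Omega)}(\boldsymbol\zeta_j,\boldsymbol\zeta_i)=w_j$, which is exactly the defining equation of $(c_1,\dots,c_p)$ via the Gram matrix. Here I should check the index convention of the matrix $[\mathbb S(\boldsymbol\zeta_i,\boldsymbol\zeta_j)]$; row $i$ gives $\sum_j \mathbb S(\boldsymbol\zeta_i,\boldsymbol\zeta_j)c_j=w_i$, which matches $\psi(\boldsymbol\zeta_i)$. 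The Gram matrix is invertible because the points are distinct, so the kernels $k_j$ are linearly independent. This independence uses that polynomials are dense in $H^2(\boldsymbol\theta(\Omega))$, and polynomials separate points. Hence $S_\varphi=X$ if and only if $\varphi(\boldsymbol\zeta_j)=\psi(\boldsymbol\zeta_j)=w_j$ for every $j$.

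With these facts the theorem follows directly. Suppose a Schur interpolant $\varphi$ exists. Then $S_\varphi=X$, so $X$ is contractive and liftable, and Theorem \ref{1st characterization} makes $X_{\mathcal Q_{\mathcal Z}}$ contractive, since $\psi=XP_{\mathcal Q_{\mathcal Z}}1=\psi_{\mathcal Z,\mathcal W}$. Conversely, suppose $X_{\mathcal Q_{\mathcal Z}}$ is contractive. The converse half of the proof of Theorem \ref{1st characterization} does not use $\|X\|\le1$; this is the content of the remark following it. Hahn--Banach and the duality $L^1$–$L^\infty$ then produce $\varphi\in\mathcal S(\boldsymbol\theta(\Omega))$ with $S_\varphi=X$. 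By the second identification, $\varphi(\boldsymbol\zeta_i)=w_i$.

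The main point requiring care is this converse direction's reliance on the remark: $X=S_\psi$ is a priori only bounded, so I would explicitly verify that each step of the converse argument uses only that $X$ commutes with the $S_{\zeta_j}$. Alternatively, one can bypass the remark by noting that $\psi$ already lies in $\mathcal Q_{\mathcal Z}$ and running the Hahn--Banach argument directly. The extension $\varphi$ lies in $H^\infty$ with norm at most $1$ and satisfies $P_{\mathcal Q_{\mathcal Z}}\varphi=\psi$. Pairing with $k_i$ then gives $\varphi(\boldsymbol\zeta_i)=\langle\varphi,k_i\rangle=\langle\psi,k_i\rangle=w_i$.
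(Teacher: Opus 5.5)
Your proposal is correct and follows essentially the same route as the paper. You realize the data as the module map $X=S_{\psi_{\mathcal Z,\mathcal W}}$ on $\mathcal Q_{\mathcal Z}$, whereas the paper defines $X$ by $X^*\mathbb S_{\boldsymbol\theta(\Omega)}(\cdot,\boldsymbol\zeta_i)=\overline{w}_i\,\mathbb S_{\boldsymbol\theta(\Omega)}(\cdot,\boldsymbol\zeta_i)$ and only afterwards identifies $XP_{\mathcal Q_{\mathcal Z}}1$ with $\psi_{\mathcal Z,\mathcal W}$; from there both arguments note that Schur lifts of $X$ are exactly Schur interpolants and invoke Theorem~\ref{1st characterization}. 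In the converse direction $X$ is a priori only bounded, and your explicit appeal to the remark, or your direct Hahn--Banach argument, makes precise a point the paper's proof uses without comment.
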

   
\begin{proof}
    Consider the operator $X_{\mathcal{Z,W}}\in \mathcal{B}(\mathcal{Q}_{\mathcal{Z}})$ defined by \beq \label{eq 12} X_{\mathcal{Z,W}}^* \mathbb S_{\boldsymbol \theta(\Omega)}(\cdot, \boldsymbol \zeta_i)=\overline{w}_i \mathbb S_{\boldsymbol \theta(\Omega)}(\cdot, \boldsymbol \zeta_i),\qquad i=1,\ldots,p.\eeq 
    A routine verification shows that $X_{\mathcal{Z},\mathcal{W}}$ is a bounded module map. Hence, by our preceding discussion, $\psi=X_{\mathcal{Z,W}}P_{\mathcal{Q}_{\mathcal{Z}}}1\in H^\infty(\boldsymbol \theta(\Omega))$ and $X_{\mathcal{Z,W}}=S_{\psi}$ on $\mathcal{Q}_{\mathcal{Z}}.$

    Suppose there exists $\varphi\in \mathcal{S}(\boldsymbol \theta(\Omega))$ such that $\varphi(\boldsymbol{\zeta}_i) = w_i$ for all $i = 1, \ldots, p$. Using the reproducing property,
    $$S_{\varphi}^* \mathbb S_{\boldsymbol \theta(\Omega)}(\cdot, \boldsymbol \zeta_i)=P_{\mathcal{Q}_{\mathcal{Z}}} T_{\varphi}^*|_{\mathcal{Q}_{\mathcal{Z}}} S_{\boldsymbol \theta(\Omega)}(\cdot, \boldsymbol \zeta_i) = \overline{\varphi(\boldsymbol \zeta_i)}\mathbb S_{\boldsymbol \theta(\Omega)}(\cdot, \boldsymbol \zeta_i)=\overline{w}_i \mathbb S_{\boldsymbol \theta(\Omega)}(\cdot, \boldsymbol \zeta_i).$$ 
    Comparing this with \eqref{eq 12}, we obtain $X_{\mathcal{Z,W}}=S_{\varphi}.$ Thus, $X_{\mathcal{Z},\mathcal{W}}$ admits a lift to the Schur function $\varphi.$
    
    Conversely, if $X_{\mathcal{Z,W}}=S_{\varphi}$ for some $\varphi\in \mathcal{S}(\boldsymbol \theta(\Omega)),$ then \eqref{eq 12} gives
    $$\overline{w}_i \mathbb S_{\boldsymbol \theta(\Omega)}(\cdot, \boldsymbol \zeta_i)=\overline{\varphi(\boldsymbol \zeta_i)} \mathbb S_{\boldsymbol \theta(\Omega)}(\cdot,\boldsymbol \zeta_i), \qquad i=1,\ldots,p,$$ and hence  $\varphi(\boldsymbol \zeta_i)=w_i$ for all $i=1,\ldots,p.$

    Therefore, the interpolation problem is solvable if and only if the operator $X_{\mathcal{Z},\mathcal{W}}$ admits a lift to the Schur-class interpolant.
    By Theorem \ref{1st characterization}, this occurs if and only if the functional 
    $X_{\mathcal{Q}_{\mathcal{Z}}}: \left(\mathcal{M}_{\mathcal{Q}_{\mathcal{Z}}}, \|\cdot \|_{L^1(\partial\boldsymbol \theta(\Omega))}\right) \to \mathbb C$ defined by $$X_{\mathcal{Q}_{\mathcal{Z}}}(f)=\int_{\partial\boldsymbol \theta(\Omega)} \psi f d\mu_{\boldsymbol \theta}, \qquad f\in \mathcal{M}_{\mathcal{Q}_{\mathcal{Z}}},$$
    is contractive.
    Moreover, in this contractive setting, $X_{\mathcal{Z}, \mathcal{W}} = S_{\psi} = S_{\varphi}$ on $\mathcal{Q}_{\mathcal{Z}}$, and $\psi(\boldsymbol{\zeta}_i) = w_i$ for all $i = 1, \ldots, p.$
    
    It remains only to identify $\psi.$ Since $\psi\in \mathcal{Q}_{\mathcal{Z}},$ there are scalars $c_1,\ldots,c_p$ such that $\psi=\sum_{j=1}^p c_j \mathbb S_{\boldsymbol \theta(\Omega)}(\cdot, \boldsymbol \zeta_j).$ Evaluating at each node $\boldsymbol \zeta_i$ yields
    $$ w_i=\psi(\boldsymbol \zeta_i)= \sum_{j=1}^p c_j \mathbb S_{\boldsymbol \theta(\Omega)}(\boldsymbol \zeta_i, \boldsymbol \zeta_j),\qquad i=1,\ldots,p.$$
    This uniquely determines the coefficients $c_1,\ldots, c_p$, and hence  $\psi=\psi_{\mathcal{Z,W}}.$
    \end{proof}
    
We next derive a geometric formulation of the interpolation problem using the distance characterization of commutant lifting.
As shown in the proof of Theorem~\ref{interpolation 1st}, the existence of a function $\varphi \in \mathcal{S}(\boldsymbol \theta(\Omega))$ satisfying $\varphi(\boldsymbol \zeta_i)=w_i$ for all $i=1,\ldots, p$ 
 is equivalent to the fact that the operator $X_{\mathcal{Z,W}}$ defined by \eqref{eq 12} admits a lift to $\varphi.$ In particular, such a lift necessarily implies that $\|X_{\mathcal{Z,W}}\|\leq 1.$ Applying Theorem \ref{2nd characterization} to $X_{\mathcal{Z,W}}$ therefore yields the following interpolation criterion.


\begin{theorem}\label{interpolation 2nd}
     Let $\mathcal{Z}=\{\boldsymbol \zeta_1,\ldots,\boldsymbol \zeta_p\}\subseteq \boldsymbol \theta(\Omega)$ be a set of $p$ distinct points, and let $w_1,\ldots,w_p\in \mathbb D$. Set $\psi=X_{\mathcal{Z,W}} P_{\mathcal{Q}_{\mathcal{Z}}}1$ and define $$\widetilde{\mathcal M}_{\mathcal{Q}_{\mathcal{Z}}}=\left(\mathcal{Q}_{\mathcal{Z}}^{conj}\ominus\mathrm{span}\{\overline{\psi}\}\right)\dotplus \mathcal{M}_{\boldsymbol \theta(\Omega)}\dotplus H^2_0(\partial\boldsymbol \theta(\Omega)).$$ Then there exists $\varphi\in \mathcal{S}(\boldsymbol \theta(\Omega))$ such that $\varphi(\boldsymbol \zeta_i)=w_i$ for all $i=1,\ldots, p$ if and only if 
     $$\dist_{L^1(\partial\boldsymbol \theta(\Omega))}\left(\frac{\overline{\psi}}{\|\psi\|^2_{L^2(\partial\boldsymbol \theta(\Omega))}}, \; \widetilde{\mathcal M}_{\mathcal{Q}_{\mathcal{Z}}}\right)\geq 1.$$
\end{theorem}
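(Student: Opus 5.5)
The plan is to reduce the interpolation problem to the distance criterion of Theorem~\ref{2nd characterization}, applied to the operator $X_{\mathcal{Z,W}}$ of \eqref{eq 12}. That theorem needs a \emph{contractive} module map, while $X_{\mathcal{Z,W}}$ is only known to be bounded. So the contractivity hypothesis has to be handled separately in each direction.

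\emph{Forward direction.} As in the proof of Theorem~\ref{interpolation 1st}, a Schur interpolant $\varphi$ gives $X_{\mathcal{Z,W}}=S_\varphi$. Hence $\|X_{\mathcal{Z,W}}\|\le\|\varphi\|_\infty\le 1$, so $X_{\mathcal{Z,W}}\in\mathcal B_1(\mathcal{Q}_{\mathcal Z})$ and it admits a Schur-class lift. The quotient module $\mathcal{Q}_{\mathcal Z}$ and the vector $\psi=X_{\mathcal{Z,W}}P_{\mathcal{Q}_{\mathcal Z}}1$ are exactly those in Theorem~\ref{2nd characterization}, and $\widetilde{\mathcal M}_{\mathcal{Q}_{\mathcal Z}}$ coincides with $\widetilde{\mathcal M}_{\mathcal{Q}_{\mathcal Z},X_{\mathcal{Z,W}}}$. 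The distance inequality then follows directly from that theorem.

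\emph{Converse direction.} Here we do not yet know that $X_{\mathcal{Z,W}}$ is contractive. We will therefore rerun the converse part of the proof of Theorem~\ref{2nd characterization}, which never used contractivity. The first step is to write each $f\in\mathcal M_{\mathcal{Q}_{\mathcal Z}}$ as $\lambda\overline\psi+h$ with $h\in\widetilde{\mathcal M}_{\mathcal{Q}_{\mathcal Z}}$. Next, we use \eqref{eq 9}, whose derivation relies only on orthogonality: $h$ lies in $\ker X_{\mathcal{Q}_{\mathcal Z}}$ by Lemma~\ref{orthogonality} and the definition of $\mathcal M_{\boldsymbol\theta(\Omega)}$. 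Combining these with the distance hypothesis gives $|X_{\mathcal{Q}_{\mathcal Z}}(f)|\le\|f\|_{L^1}$.

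With that contractivity in hand, we invoke the Remark following Theorem~\ref{1st characterization}, which says bounded module maps suffice, or equivalently Theorem~\ref{interpolation 1st}, which is stated without any contractivity assumption. This yields a Schur function $\varphi$ with $X_{\mathcal{Z,W}}=S_\varphi$. As in the proof of Theorem~\ref{interpolation 1st}, comparing $S_\varphi^*$ with \eqref{eq 12} on the kernel functions then gives $\varphi(\boldsymbol\zeta_i)=w_i$.

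One degenerate case needs care: $\psi=0$, for which the normalization in the distance expression is undefined. Since $\psi(\boldsymbol\zeta_i)=w_i$, this happens only when all $w_i=0$, and then $\varphi\equiv0$ interpolates trivially. I would dispose of this case first, or note that it is implicitly excluded. The only step requiring attention is confirming that the converse of Theorem~\ref{2nd characterization} and the Remark really avoid the contractivity hypothesis; everything else is bookkeeping.
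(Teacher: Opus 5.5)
Your proposal is correct and follows the paper's argument: use the equivalence from the proof of Theorem~\ref{interpolation 1st} to reduce to $X_{\mathcal{Z,W}}$, then apply Theorem~\ref{2nd characterization}. You are more careful than the paper in two places. First, the paper only records $\|X_{\mathcal{Z,W}}\|\le 1$ for the forward direction and tacitly relies on the Remark after Theorem~\ref{1st characterization} for the converse. Second, it does not address the degenerate case $\psi=0$ (all $w_i=0$), where the normalized vector $\overline{\psi}/\|\psi\|^2_{L^2(\partial\boldsymbol\theta(\Omega))}$ is undefined.
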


We now pass to quotient domains of the polydisc associated with the finite complex reflection groups $G(m,n,d)$. For these domains, Theorem \ref{3rd characterization} gives a commutant lifting criterion in terms of inner functions. Applying that criterion to the finite-dimensional zero-based quotient modules determined by the interpolation nodes yields the following inner-function characterization of Schur-class interpolation.

\begin{theorem}\label{interpolation 3rd}
    Let $\mathcal{Z}=\{\boldsymbol \zeta_1,\ldots,\boldsymbol \zeta_p\}\subseteq \boldsymbol \theta(\mathbb D^d)$ be a set of $p$ distinct points, and let $w_1,\ldots,w_p\in \mathbb D.$ Then there exists $\varphi \in \mathcal{S}(\boldsymbol \theta(\mathbb D^d))$ such that $\varphi(\boldsymbol \zeta_i)=w_i$ for all $i=1,\ldots, p$ if and only if there exists a sequence $\{f_n\}\subseteq \mathcal{I}(\boldsymbol \theta(\mathbb D^d))$ satisfying $$\lim_{n \to \infty} f_n(\boldsymbol \zeta_j)=w_j, \qquad j=1,\ldots, p.$$
\end{theorem}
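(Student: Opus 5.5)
We will deduce the statement from Corollary~\ref{refined characterization} applied to the zero-based quotient module $\mathcal{Q}_{\mathcal{Z}}$ and the module map $X_{\mathcal{Z},\mathcal{W}}$ of \eqref{eq 12}. As in the proof of Theorem~\ref{interpolation 1st}, an interpolant $\varphi\in\mathcal{S}(\boldsymbol\theta(\mathbb D^d))$ with $\varphi(\boldsymbol\zeta_i)=w_i$ exists if and only if $X_{\mathcal{Z},\mathcal{W}}$ admits a Schur-class lift. Set $\psi=X_{\mathcal{Z},\mathcal{W}}P_{\mathcal{Q}_{\mathcal{Z}}}1$; then $\psi=\psi_{\mathcal{Z},\mathcal{W}}$ and $\psi(\boldsymbol\zeta_i)=w_i$. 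The one point needing care is that Corollary~\ref{refined characterization} assumes $X$ is contractive. We handle this in each direction separately rather than invoking the corollary blindly.

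The key computation translates weak-$*$ convergence determined by $\mathcal{Q}_{\mathcal{Z}}^{conj}$ into pointwise convergence at the nodes. Since $\mathcal{Q}_{\mathcal{Z}}$ is spanned by the kernel functions $k_i:=\mathbb S_{\boldsymbol\theta(\Omega)}(\cdot,\boldsymbol\zeta_i)$, it suffices to test against $g=\overline{k_i}$. For any $f\in H^\infty(\boldsymbol\theta(\mathbb D^d))\subseteq H^2$, the boundary identification of Proposition~\ref{required result 1} together with the reproducing property gives
\[
\int_{\boldsymbol\theta(\mathbb T^d)} f\,\overline{k_i}\,d\mu_{\boldsymbol\theta}=c\,\langle f,k_i\rangle_{H^2}=c\,f(\boldsymbol\zeta_i),
\]
where $c$ is the fixed normalizing constant relating $\int\cdot\,d\mu_{\boldsymbol\theta}$ to the $L^2(\partial\boldsymbol\theta(\Omega))$ inner product (a factor $c_{\boldsymbol\theta}^2$, which is harmless). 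The same identity holds for $\psi$. Hence $f_n\to\psi$ in the weak-$*$ topology determined by $\mathcal{Q}_{\mathcal{Z}}^{conj}$ if and only if $f_n(\boldsymbol\zeta_j)\to\psi(\boldsymbol\zeta_j)=w_j$ for every $j$, by linearity over the finite spanning set.

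\emph{Forward direction.} If an interpolant exists, then $X_{\mathcal{Z},\mathcal{W}}=S_\varphi$ is contractive. Corollary~\ref{refined characterization} then yields inner $f_n$ converging weak-$*$ on $\mathcal{Q}_{\mathcal{Z}}^{conj}$, which by the computation above means $f_n(\boldsymbol\zeta_j)\to w_j$. Alternatively, this follows directly from Proposition~\ref{weak*}, since weak-$*$ convergence in $L^\infty$ tested against $\overline{k_j}\in L^1$ gives convergence at the nodes.

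\emph{Converse direction.} Here I would avoid needing contractivity of $X_{\mathcal{Z},\mathcal{W}}$ in advance by arguing directly as in the converse of Theorem~\ref{3rd characterization}, which uses only the Remark following Theorem~\ref{1st characterization}. For $g\in\mathcal{M}_{\mathcal{Q}_{\mathcal{Z}}}$, write $g=\bar q+m+h$. The functional $X_{\mathcal{Q}_{\mathcal{Z}}}$ and each functional $g\mapsto\int f_n g\,d\mu_{\boldsymbol\theta}$ vanish on $\mathcal{M}_{\boldsymbol\theta(\Omega)}\dotplus H^2_0$. For $f_n$ this holds because $f_n\in H^2$ and by Lemma~\ref{orthogonality}, exactly as in the forward part of the proof of Theorem~\ref{1st characterization}. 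On $\bar q\in\mathcal{Q}_{\mathcal{Z}}^{conj}$ we have convergence by the node computation. Therefore $\int f_n g\to X_{\mathcal{Q}_{\mathcal{Z}}}(g)$ for every $g\in\mathcal{M}_{\mathcal{Q}_{\mathcal{Z}}}$. Since $|f_n|=1$ a.e., it follows that $|X_{\mathcal{Q}_{\mathcal{Z}}}(g)|\le\|g\|_{L^1}$.

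The remark (boundedness of $X_{\mathcal{Z},\mathcal{W}}$ suffices) or Theorem~\ref{interpolation 1st} then produces the Schur interpolant. I expect the main obstacle to be this bookkeeping: verifying that the pairing with $H^2_0$ vanishes for the $f_n$ (which requires $f_n(0)$-independence via Lemma~\ref{orthogonality}), and tracking the normalization constant between $\mu_{\boldsymbol\theta}$ and the $H^2$ inner product.
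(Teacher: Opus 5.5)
Your proposal is correct and follows essentially the paper's route: you reduce to the zero-based quotient module $\mathcal{Q}_{\mathcal{Z}}$ with the module map $X_{\mathcal{Z},\mathcal{W}}$, use the reproducing property to turn weak-$*$ convergence against $\mathcal{Q}_{\mathcal{Z}}^{conj}$ into convergence $f_n(\boldsymbol\zeta_j)\to\psi(\boldsymbol\zeta_j)=w_j$, and then invoke the inner-function lifting criterion (Theorem~\ref{3rd characterization} and Corollary~\ref{refined characterization}). Your extra care in the converse fills in steps the paper's proof leaves implicit: you verify that $\int f_n g\,d\mu_{\boldsymbol\theta}$ also vanishes on $\mathcal{M}_{\boldsymbol\theta(\Omega)}\dotplus H^2_0$, and you route the conclusion through the Remark or Theorem~\ref{interpolation 1st} instead of tacitly assuming $X_{\mathcal{Z},\mathcal{W}}$ is contractive, as the corollary's hypothesis requires.
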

\begin{proof}
Let $\mathcal{Q}_{\mathcal{Z}}$ be the zero-based quotient module associated with $\mathcal{Z}$, and let $X_{\mathcal{Z,W}}\in \mathcal{B}(\mathcal{Q}_{\mathcal{Z}})$ be the module map defined by \eqref{eq 12}. Set $\psi=X_{\mathcal{Z,W}}P_{\mathcal{Q}_{\mathcal{Z}}}1.$ As in the proof of Theorem \ref{interpolation 1st}, the existence of an interpolating Schur function $\varphi$ is equivalent to $X_{\mathcal{Z,W}}$ admitting a lift to $\varphi.$ 
In this scenario, $$X_{\mathcal{Z,W}}=S_{\varphi}=S_{\psi} \quad\text{and}\quad \psi(\boldsymbol \zeta_i)=w_i \;\text{for all}\; i=1,\ldots,p.$$

Assume first that such a function $\varphi$ exists. By Theorem~\ref{3rd characterization}, there exists a sequence $\{f_n\}\subseteq \mathcal{I}(\boldsymbol \theta(\mathbb D^d))$ converging to $\psi$ in the weak-$*$ topology determined by $\mathcal{M}_{\mathcal{Q}_{\mathcal{Z}}}$.
In particular, $$\int_{\boldsymbol \theta(\mathbb T^d)} f_n \overline{\mathbb S_{\boldsymbol \theta(\mathbb D^d)}(\cdot, \boldsymbol \zeta_j)} d\mu_{\boldsymbol \theta} \to \int_{\boldsymbol \theta(\mathbb T^d)} \psi \overline{\mathbb S_{\boldsymbol \theta(\mathbb D^d)}(\cdot, \boldsymbol \zeta_j)} d\mu_{\boldsymbol \theta}, \qquad j=1,\ldots, p.$$
Applying the reproducing property, this yields
$$f_n(\boldsymbol \zeta_j)\to \psi(\boldsymbol \zeta_j)=w_j, \qquad j=1,\ldots,p.$$

Conversely, suppose that there exists a sequence $\{f_n\}\subseteq \mathcal{I}(\boldsymbol \theta(\mathbb D^d))$ such that $$ \lim_{n\to\infty} f_n(\boldsymbol{\zeta}_j)=w_j, \qquad j=1,\ldots,p.$$ 
As established prior to Theorem \ref{interpolation 1st}, the module map $X_{\mathcal{Z,W}}$ coincides with $S_{\psi}$, where $\psi= X_{\mathcal{Z,W}}P_{\mathcal{Q}_{\mathcal{Z}}}1.$ Moreover, \eqref{eq 12} and the reproducing property imply that $$\psi(\boldsymbol \zeta_j)=w_j, \qquad j=1,\ldots, p.$$
Since $\mathcal Q_{\mathcal Z}$ is spanned by the kernel functions at
$\boldsymbol\zeta_1,\ldots,\boldsymbol\zeta_p$, the reproducing property shows that $f_n$ converges to $\psi$ in the weak-$*$ topology determined by
$\mathcal Q_{\mathcal Z}^{\operatorname{conj}}$. By Corollary \ref{refined characterization}, $X_{\mathcal Z,\mathcal W}$ admits a Schur-class lift.
Hence, there exists $\varphi\in\mathcal S(\boldsymbol\theta(\mathbb D^d))$ such that $X_{\mathcal{Z,W}}=S_{\varphi}.$
Using \eqref{eq 12} and the reproducing property once more, we conclude that
    $$\varphi(\boldsymbol \zeta_j)=w_j, \qquad j=1,\ldots,p.$$ This completes the proof.
\end{proof}

  \subsubsection*{Acknowledgment} The author expresses his sincere gratitude to his PhD supervisor, Professor Surjit Kumar, for many valuable suggestions and comments during the preparation of this draft.

\end{document}